\numberwithin{equation}{section} 
\theoremstyle{plain}
\def\ZZ{\mathbf{Z}}
\def\A{{\rm A}}
\def\B{{\rm B}}
\def\C{{\rm C}}
\def\D{{\rm D}}
\def\E{{\rm E}}
\def\F{{\rm F}}
\def\G{{\rm G}}
\def\H{{\rm H}}
\def\J{{\rm J}}
\def\K{{\rm K}}
\def\N{{\rm N}}
\def\P{{\rm P}}
\def\Q{{\rm Q}}
\def\R{{\rm R}}
\def\T{{\rm T}}
\def\X{{\rm X}}
\def\Y{{\rm Y}}
\def\Aa{\mathscr{A}}
\def\Cc{\EuScript{C}}
\def\Oo{\EuScript{O}}
\def\Pp{\EuScript{P}}
\def\Tt{\EuScript{T}}
\def\Ga{\Gamma}
\def\La{\Lambda}
\def\b{\beta}
\def\d{\delta}
\def\e{\varepsilon}
\def\ee{\upepsilon}
\def\k{\kappa}
\def\l{\lambda}
\def\s{\sigma}
\def\t{\theta}
\def\w{\varpi}
\def\ie{c'est-à-dire }
\def\>{\geqslant}
\def\<{\leqslant}
\def\Mat{{\rm M}}
\def\GL{{\rm GL}}
\def\Gal{{\rm Gal}}
\def\mult#1{{#1}^{\times}}
\def\qlb{{\overline{\mathbf{Q}}_\ell}}
\def\zlb{{\overline{\mathbf{Z}}_\ell}}
\def\flb{{\overline{\mathbf{F}}_{\ell}}}
\def\r{{\textbf{\textsf{r}}}}
\def\cc{c}
\def\kk{\mathfrak{k}}
\def\ww{w}
\def\d{k}
\def\gg{g}
\def\sp{{\rm sp}}
\def\xt{x}
\def\ct{\widetilde\chi}
\def\kt{\widetilde\k}
\def\lt{\widetilde\l}
\def\pt{\widetilde\pi}
\def\rt{{\widetilde\rho}}
\def\st{\widetilde\s}
\def\tL{\widetilde\Lambda}
\def\GB{\mathcal{G}}
\def\TT{\boldsymbol{\Theta}}
\def\0{\boldsymbol{0}}
\def\YY{\textbf{\textsf{Y}}}
\def\BB{\textbf{\textsf{B}}}
\long\def\MSC#1\EndMSC{\def\arg{#1}\ifx\arg\empty\relax\else
     {\par\narrower\noindent%
     2010 Mathematics Subject Classification: #1\par}\fi}
\long\def\KEY#1\EndKEY{\def\arg{#1}\ifx\arg\empty\relax\else
	{\par\narrower\noindent Keywords and Phrases: #1\par}\fi}
\title[Comptage de représentations cuspidales congruentes]
{Comptage de représentations cuspidales congruentes}
\author{Vincent Sécherre}
\address{Université de Versailles St-Quentin-en-Yvelines\\
Laboratoire de Mathémati\-ques de Versailles\\
45 avenue des Etats-Unis\\
78035 Versailles cedex, France}
\email{vincent.secherre@math.uvsq.fr}
\thanks{Mon séjour à l'University of East Anglia en mai 2014, 
durant lequel une partie de ce travail a été effectuée, 
a été financé par l'EPSRC (grant EP/H00534X/1).
Je remercie chaleureusement Shaun Stevens pour son invitation % à l'UEA 
et pour les discussions que nous avons eues à propos de ce travail.}
\begin{document} 

\maketitle

\MSC 
22E50 
\EndMSC

\KEY 
Modular representations of $p$-adic reductive groups, 
Jacquet-Lang\-lands correspondence, Cuspidal representations,
$\ell$-adic lifting, Congruences mod $\ell$
\EndKEY

\thispagestyle{empty}

% {\hfill\textcolor{green}{\bf\today}}

\section{Introduction et énoncé des principaux résultats}

\subsection{} 

Soit $\F$ un corps localement compact non archimédien, 
soit $\H$ le groupe linéaire général $\GL_{n}(\F)$, $n\>1$, 
et soit $\A$ le groupe multiplicatif d'une 
$\F$-algèbre à division centrale de degré réduit égal à $n$.
Soit $\ell$ un nombre premier différent de la caractéristique résiduelle
$p$ de $\F$. 
Dat a construit dans \cite{Datj} un cas particulier de correspondance de 
Jacquet-Langlands locale modulo $\ell$.
C'est une bijection entre classes de représentations 
irréductibles $\ell$-modulaires --- \ie à coefficients dans une 
clôture algébrique $\flb$ d'un corps fini de caractéristique $\ell$ --- 
de $\A$ et classes de certaines représentations irréductibles 
$\ell$-modulaires de $\H$, baptisées ``super-Speh'' 
(paragraphe \ref{Finale}).
Elle est compatible, en un certain sens, à la correspondance de 
Jacquet-Lang\-lands locale entre les classes de re\-pré\-sen\-ta\-tions 
irréductibles $\ell$-adiques --- \ie à coefficients dans une 
clôture algé\-brique $\qlb$ du corps des nombres $\ell$-adiques --- 
de $\A$ et la série discrète $\ell$-adique de $\H$.

\subsection{} 

Plutôt que d'étudier directement la série discrète $\ell$-adique, 
qui se réduit mal modulo $\ell$, 
Dat étu\-die son image par l'involution de Zelevinski, \ie l'ensemble des
classes de représen\-ta\-tions de Speh $\ell$-adiques de $\H$.
Une telle représentation est dite 
$\ell$-super-Speh lorsqu'elle est entière, et lors\-que sa réduction modulo $\ell$ est 
irréductible et super-Speh.
La construction de la correspondan\-ce de Jacquet-Langlands locale 
modulo $\ell$ de \cite{Datj} repose sur le fait crucial que la correspondance 
de Jacquet-Langlands loca\-le $\ell$-adique fait se correspondre bijectivement 
représentations $\ell$-adiques entières de $\A$ dont la réduction modulo $\ell$ est 
irréductible, et représentations $\ell$-super-Speh de $\H$.

\subsection{} 

Pour prouver ce fait, Dat utilise un critère numérique de
$\ell$-supercuspidalité établi par Vignéras pour construire 
une correspondance de Langlands locale modulo $\ell$ (\cite{Vigl}).
La réduction modulo $\ell$ d'une représentation irréductible cuspidale 
$\ell$-adique entière $\rt$ de $\H$ est toujours irréductible et cuspidale, 
mais elle n'est pas toujours supercuspidale~; 
plus précisément, elle est supercuspidale si et seu\-le\-ment si 
le nombre de représentations 
cuspidales $\ell$-adi\-ques entières de $\H$ qui sont 
\textit{stric\-te\-ment} congrues à $\rt$ est 
``le plus grand possible'' (voir \cite[Proposition 2.3]{Vigl}).
Il prouve aussi une variante de ce critère numérique pour $\A$ 
(voir \cite[Proposition 2.3.2]{Datj}).

\subsection{} 
\label{P14}

Soit maintenant $\D$ une $\F$-algèbre à division centrale de degré réduit $d$, 
et posons $\G=\GL_{m}(\D)$, $m\>1$.
Si l'on veut construire une correspondance de Jacquet-Langlands locale modulo 
$\ell$ généra\-le, il est naturel de commencer par généraliser le critère
numérique de $\ell$-supercuspidalité de Vi\-gné\-ras et Dat. 
C'est ce que nous faisons, en le présentant sous une forme légèrement 
diffé\-rente.
Soit $\rt$ une représentation irréductible cuspidale $\ell$-adique entière de
$\G$. 
Il y a (\cite[Théorème 3.15]{MSt}) 
une re\-pré\-sentation irréductible cuspidale $\ell$-modulaire 
$\rho$ de $\G$ et un unique entier $a=a(\rt)\>1$ tels que la réduction modulo 
$\ell$ de $\rt$ soit égale à~: 
\begin{equation}
\label{redcusp}
\r_\ell(\rt)=\rho+\rho\nu+\dots+\rho\nu^{a-1}
\end{equation}
dans le groupe de Grothendieck des représentations $\ell$-modulaires de 
longueur finie de $\G$, où $\nu$ désigne le caractère valeur absolue de la 
norme réduite.
La repré\-sentation $\rho$ n'est pas unique en gé\-né\-ral, mais sa classe 
inertielle $[\G,\rho]$ ne dépend que de la clas\-se inertielle $[\G,\rt]$ de $\rt$.
Quand $\G$ est déployé, l'entier $a(\rt)$ est toujours égal à $1$, 
\ie que la ré\-duction modulo $\ell$ de $\rt$ est tou\-jours irréductible.

\begin{defi}
On dit que $\rt$ est \textit{$\ell$-supercuspidale} 
si $\r_\ell(\rt)$ est irréductible et supercuspidale.
\end{defi}

\subsection{} 

Etant donnée une représentation irréductible cuspidale 
$\rt$ comme en \ref{P14},
on note~:
\begin{equation*}
\r_\ell([\G,\rt]) 
\end{equation*}
l'ensemble des réductions modulo $\ell$ des représentations 
entières iner\-tiellement équivalentes à $\rt$, 
et on appelle cet ensemble la \textit{réduction modulo $\ell$} de $[\G,\rt]$. 
On note $n(\rt)$ le nom\-bre de ca\-rac\-tè\-res $\ell$-adiques 
non ramifiés $\ct$ de $\G$
tels que $\rt\ct$ est isomorphe à $\rt$ et $\cc(\rt)$
la plus grande puis\-san\-ce de $\ell$ divisant $q^{n(\rt)}-1$. 
Le résultat suivant généralise \cite{Vigl} et \cite{Datj}.

\begin{theo}
\label{CongCusp}
Soit $\rt$ une représentation irréductible cuspidale $\ell$-adique entière de $\G$. 
\begin{enumerate}
\item
L'ensemble des classes inertielles de représentations 
irréductibles cuspidales $\ell$-adi\-ques en\-tiè\-res de $\G$ congrues à $\rt$
est fini, de cardinal noté $t(\rt)$.
\item
On a~:
\begin{equation*}
t(\rt)\<\cc(\rt)
\end{equation*}
avec égalité si et seulement si $\rt$ est $\ell$-supercuspidale.
\end{enumerate}
\end{theo}

\subsection{} 
\label{DEFK}

Intéressons-nous maintenant au cas où $\rt$ n'est pas
$\ell$-super\-cuspidale~;
en étu\-diant plus finement la façon dont les entiers $t(\rt)$ et $c(\rt)$ diffèrent, 
il est raisonnable de penser qu'on pourra en déduire des in\-for\-mations
sur la structure de $\rt$.
D'après la classification des représentations irréductibles cuspidales 
$\ell$-modulaires de $\G$ en 
fonction des supercuspidales (\cite[Théorème 6.14]{MSc}), 
il existe un unique entier naturel~:
\begin{equation*}
\d(\rho) \>1
\end{equation*}
tel que $\rho$ apparaisse comme sous-quotient de l'induite parabolique d'une 
représentation irré\-duc\-tible supercuspidale du sous-groupe de Levi standard 
$\GL_{r}(\D)\times\dots\times\GL_{r}(\D)$ avec $r\d(\rho)=m$.
En particulier, $\rho$ est supercuspidale si et seulement si $\d(\rho)=1$.
Posons~:
\begin{equation}
\label{DEFw}
\ww(\rt) = \d(\rho) a(\rt).
\end{equation}
Ainsi $\rt$ est $\ell$-super\-cuspidale si et seulement si $\ww(\rt)=1$. 
Le résultat suivant montre qu'on peut déterminer la valeur de $\ww(\rt)$ 
en com\-pa\-rant $t(\rt)$ et $\cc(\rt)$.

\begin{theo}
\label{CalculW}
Soit $\rt$ une $\qlb$-représentation irréductible cuspidale entière et 
non $\ell$-super\-cuspidale de $\G$. 
Alors~:
\begin{equation*}
t(\rt) \ww(\rt) = 
\left\{
\begin{array}{ll}
\cc(\rt)-1 & \text{si $t(\rt)$ est premier à $\ell$,} \\
\cc(\rt)(\ell-1)\ell^{-1} & \text{sinon.}
\end{array}
\right.
\end{equation*}
\end{theo}

\subsection{} 
\label{par7}

Changeons maintenant de point de vue. 
Quand $\G$ est déployé, 
Vignéras a montré (\cite{Vigb}) qu'une représentation irréductible
cus\-pi\-dale $\ell$-modu\-lai\-re $\rho$ de $\G$ 
se relève toujours en une 
représentation $\ell$-adique de $\G$, 
\ie qu'il existe une re\-pré\-sentation $\ell$-adique entière de $\G$ dont la 
réduction modulo $\ell$ est isomorphe à $\rho$. 
Si main\-te\-nant $\G$ n'est pas déployé, toute représentation 
irréductible \textit{super\-cuspidale}
$\ell$-mo\-du\-laire de $\G$ se relève à $\qlb$
(voir \cite{MSt,MSc})
mais il existe 
des représentations cuspida\-les qui ne se relèvent pas.
Etant don\-née une représentation 
cuspidale non super\-cus\-pidale $\ell$-mo\-dulaire $\rho$ de $\G$,
il est naturel de demander à quelle condition elle admet un relè\-ve\-ment.

\subsection{} 
\label{DEFds}

Pour répondre à cette question, nous avons besoin de l'invariant~:
\begin{equation*}
s(\rho) \>1
\end{equation*} 
introduit dans \cite{MSt},
dont la définition repose sur la construction des
représentations irré\-duc\-ti\-bles cuspidales de $\G$ par la théorie des types 
de Bushnell-Kutzko (voir la section \ref{SECTION2} ci-dessous). 
C'est un diviseur de $d$~; en particulier il est toujours égal à $1$ quand 
$\G$ est déployé.
Cet invariant est relié à un autre invariant, le 
\textit{degré para\-métrique} $\delta(\rho)$ 
introduit dans \cite{BH}, par l'identité $\delta(\rho)s(\rho)=md$.

\begin{theo}
\label{lift}
Soit $\rho$ une représentation irréductible cuspidale non 
supercuspidale $\ell$-modu\-lai\-re de $\G$.
Pour que $\rho$ se relève à $\qlb$, il faut et il suffit que 
les entiers $s(\rho)$ et $\d(\rho)$ soient premiers entre eux
et que la représentation tordue $\rho\nu$ soit isomorphe à $\rho$.
\end{theo}

Quand $\G$ est déployé, on a 
toujours $s(\rho)=1$ et une repré\-sen\-tation irréductible cuspidale non 
super\-cuspidale $\rho$ est toujours isomorphe à sa tordue $\rho\nu$.
La condition du théo\-rème \ref{lift}
est donc toujours vérifiée~; on retrouve ainsi le résultat de Vignéras du 
paragraphe \ref{par7}. 

\subsection{} 

Plus généralement, une représentation irréductible cus\-pi\-dale 
$\ell$-modu\-lai\-re $\rho$ de $\G$ étant fixée, nous pouvons 
chercher les 
valeurs possibles de $w(\rt)$ lorsque $\rt$ décrit les représentations 
irré\-duc\-tibles cuspidales $\ell$-adiques entières 
de $\G$ dont la réduction modulo $\ell$ contient $\rho$. 
Le théorème \ref{lifta}
répond à cette question et complète ainsi le théorème \ref{lift}.
Notons $v$ la valuation $\ell$-adique sur $\ZZ$
(normalisée par $v(\ell)=1$) et notons $\e(\rho)$ l'ordre de $q^{n(\rho)}$ dans 
$(\ZZ/\ell\ZZ)^\times$, \ie le plus 
petit entier $k\>1$ tel que $\rho\nu^k$ soit isomorphe à $\rho$.

\begin{theo}
\label{lifta}
Soit $\rho$ une représentation irréductible cuspidale $\ell$-modu\-lai\-re de 
$\G$ et soit un entier $a>1$.
Pour qu'il existe une $\qlb$-représentation irréductible cuspidale entière 
$\rt$ de $\G$ dont la réduc\-tion modulo $\ell$ contienne $\rho$ et soit de 
longueur $a$, il faut et il suffit que~:
\begin{enumerate}
\item
il existe un entier $u\in\{0,\dots,v(s(\rho))\}$ tel que $a=\e(\rho)\ell^u$~;
\item
les entiers $s(\rho)a^{-1}$ et $\d(\rho)$ soient premiers entre eux. 
\end{enumerate}
\end{theo}

\subsection{} 

Dans la dernière section, nous utilisons le théorème \ref{lifta} pour 
obtenir une formule de comptage de classes inertielles de représentations 
cuspidales $\ell$-modulaires, dans l'esprit de~\cite{BHcgds}. 
Contrairement à Bushnell et Henniart, qui obtiennent leur formule en 
s'appuyant sur la correspondance de Jacquet-Langlands locale et sur 
l'existence préalable 
d'une telle for\-mu\-le dans le cas 
du groupe mul\-tiplicatif d'une algèbre à division,
nous établissons la nôtre par un calcul direct, en termes 
de $\F$-endo\-classes de caractères simples \cite{BSS}.

Fixons des entiers $n,w\>1$ tels que $w$ divise $n$, et un nombre rationnel 
$j\>0$. 
Soient $m,d\>1$ des entiers tels que $md=n$
et soit $\D$ une $\F$-algèbre à division centrale de degré réduit $d$.
On note $\Aa_\ell(\D,j,w)$ l'en\-sem\-ble des réductions mod $\ell$ de 
classes inertielles de re\-pré\-sen\-ta\-tions ir\-ré\-ductibles cuspidales 
$\ell$-adiques $\rt$ telles que~: 
\begin{enumerate}
\item
il existe un entier $u\>1$ divisant $m$ 
tel que $\rt$ soit une représentation irréductible
cuspidale $\ell$-adique de $\GL_{u}(\D)$~;
\item
on a $w(\rt)=w$ et le niveau normalisé de $\rt$ est inférieur ou égal à $j$.
\end{enumerate}
C'est un ensemble fini, de cardinal noté 
$\boldsymbol{a}_{\ell}(\D,j,w)$.
Fixons par ailleurs une clôture al\-gé\-brique $\overline{\kk}$ du corps résiduel
$\kk_\F$ de $\F$, et notons 
$\boldsymbol{y}_{\ell}^{1}(q,n,w)$ le nom\-bre de 
$y\in\overline{\kk}{}^\times$ tels que~:
\begin{enumerate}
\item
l'ordre de $y$ est premier à $\ell$~;
\item
le degré de $y$ sur $\kk_\F$, noté $\deg(y)$, divise $nw^{-1}$~; 
\item
l'ordre de $q^{\deg(y)}$ 
dans $(\ZZ/\ell\ZZ)^\times$ est égal au 
plus grand diviseur de $w$ premier à $\ell$. 
\end{enumerate}
On a la formule suivante~;
pour la notion d'endo-classe, on renvoie au paragraphe \ref{ENDO} ci-dessous et à 
\cite{BSS}.

\begin{theo}
\label{comptagedeclasses}
On a~:
\begin{equation*}
\boldsymbol{a}_{\ell}(\D,j,w) 
= \sum\limits_{\TT}\ \boldsymbol{y}_{\ell}^{1}(q(\TT),n(\TT),w),
\end{equation*}
la somme portant sur les $\F$-endoclasses $\TT$ de niveau normalisé inférieur 
ou égal à $j$ et de degré $\deg(\TT)$ divisant $nw^{-1}$, et où~:
\begin{equation*}
n(\TT)=\frac{n}{\deg(\TT)},
\quad
q(\TT)=q^{f(\TT)},
\end{equation*}
l'entier $f(\TT)$ désignant le degré résiduel de $\TT$.
\end{theo}

Cette somme ne dépendant que de $\ell$, $n$, $w$, $j$ et $q$, 
on en déduit le corollaire suivant.

\begin{coro}
\label{DonaTartt}
On a $\boldsymbol{a}_{\ell}(\D,j,w)=\boldsymbol{a}_{\ell}(\F,j,w)$. 
\end{coro}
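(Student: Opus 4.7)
La preuve se ramène à une observation directe sur la formule du théorème \ref{comptagedeclasses}, et je procéderais en deux étapes.

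J'appliquerais d'abord ce théorème au groupe $\G=\GL_m(\D)$, ce qui donne l'égalité
\begin{equation*}
\boldsymbol{a}_{\ell}(\D,j,w) = \sum_{\TT}\boldsymbol{y}_{\ell}^{1}(q(\TT),n(\TT),w),
\end{equation*}
la somme portant sur les $\F$-endoclasses $\TT$ de niveau normalisé inférieur ou égal à $j$ et de degré divisant $nw^{-1}$. Puis je l'appliquerais au cas déployé $\GL_n(\F)$ (pour lequel $m=n$ et $d=1$), ce qui fournit une expression tout à fait analogue pour $\boldsymbol{a}_{\ell}(\F,j,w)$.

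Il ne reste plus qu'à constater que les deux membres de droite coïncident. L'ensemble d'indexation est un invariant du corps $\F$ et des entiers $n,w,j$~: la notion de $\F$-endoclasse, de niveau normalisé et de degré ne font intervenir que $\F$ (voir le paragraphe \ref{ENDO} et \cite{BSS}), et non le choix de l'algèbre à division $\D$. De même, pour chaque endoclasse $\TT$, les entiers $n(\TT)=n/\deg(\TT)$ et $q(\TT)=q^{f(\TT)}$ ne dépendent que de $\TT$, $n$ et $q$, donc chaque sommand $\boldsymbol{y}_{\ell}^{1}(q(\TT),n(\TT),w)$ est lui aussi indépendant de $\D$.

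Aucune difficulté technique n'est à prévoir ici~: toute la substance du résultat est concentrée dans le théorème \ref{comptagedeclasses}, et le corollaire s'en déduit immédiatement en lisant la formule comme une quantité ne dépendant que de $\ell$, $n$, $w$, $j$ et $q$.
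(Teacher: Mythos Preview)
Your argument is correct and is exactly the one the paper uses: the corollary is deduced immediately from the theorem by observing that the right-hand side of the formula depends only on $\ell$, $n$, $w$, $j$ and $q$, hence not on the choice of $\D$. Your elaboration on why the indexing set and the summands are independent of $\D$ simply makes explicit what the paper states in one sentence.
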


\subsection{}
\label{Finale}

Dans un travail ultérieur, nous montrerons comment le critère numérique 
affiné (théorème \ref{CalculW}) et la formule de comptage (corollaire 
\ref{DonaTartt}) jouent un rôle central pour prouver que la correspon\-dan\-ce 
de Jacquet-Langlands locale $\ell$-adique entre représentations irréductibles 
entières de $\A$ et sé\-rie dis\-crè\-te entière de $\G$ préserve les congruences 
et l'invariant $w$, induisant une bi\-jec\-tion entre clas\-ses de 
congruence modulo $\ell$ de représentations. 
Les résultats de Dat \cite{Datj} correspondent au cas particulier 
où $\G=\H=\GL_n(\F)$ et $w=1$.

% \section*{Remerciements}

% Je remercie chaleureusement Shaun Stevens pour son invitation à l'University 
% of East Anglia en mai 2014, et pour les discussions que nous avons 
% eues à propos de ce travail.

\section*{Notations et conventions}

Dans tout cet article, on fixe un corps localement compact non archimédien 
$\F$ et une $\F$-algèbre à division centrale $\D$, de degré réduit noté $d$. 
On fixe un entier $m\>1$ et on pose $\G=\GL_m(\D)$.

Si $\K$ désigne une algèbre à division sur une extension finie de $\F$, 
on note $\kk_\K$ son corps résiduel et $q_\K$ le cardinal de $\kk_\K$.
On note en particulier $q=q_\F$ le car\-di\-nal du corps résiduel de $\F$.

On fixe un nombre premier $\ell$ ne divisant pas $q$.
On note $\qlb$ une clôture algébrique du corps des nombres $\ell$-adiques
et $\zlb$ son anneau d'entiers.
Son corps résiduel $\flb$ 
est une clôture algé\-brique d'un corps fini de caractéristique $\ell$.

Toutes les représentations considérées dans cette article sont lisses. 

Deux représentations $\ell$-adiques entières de longueur finie 
(de $\G$ ou d'un groupe profini) 
sont dites congruentes (modulo $\ell$) 
si elles ont la même réduction modulo $\ell$
(voir \cite{Vigb,Vigw}). 

\section{Rappels sur les représentations cuspidales}
\label{SECTION2}

Au paragraphe \ref{PARA21}, $\R$ est un corps algébriquement clos de 
caractéristique $0$ ou $\ell$.  
Ensuite, ce sera ou bien le corps $\qlb$, ou bien le corps $\flb$. 

\subsection{}
\label{PARA21}

Rappelons quelques faits tirés de \cite{MSt} sur les 
$\R$-représentations irréductibles cuspidales de $\G$.
D'abord, il y a une correspondance bijective naturelle~:
\begin{equation}
\label{sichuan}
[\G,\rho] \leftrightarrow [\J,\l]
\end{equation}
entre classes 
inertielles de $\R$-représentation irréductible cuspidale de $\G$ et 
classes de $\G$-conjugai\-son d'objets appelés types simples maximaux 
de $\G$ (\cite[\S3]{MSt}).
Plus précisément, la classe d'inertie de $\rho$ et la classe de conjugaison 
de $(\J,\l)$ se correspondent par \eqref{sichuan} 
si et seulement si la restric\-tion de $\rho$ à
$\J$ possède une sous-représentation isomorphe à $\l$. 

Un type simple maximal de $\G$ est une paire $(\J,\l)$ formée d'un 
sous-groupe ouvert compact $\J$ de $\G$ et d'une $\R$-repré\-sen\-ta\-tion 
irréductible $\l$ de $\J$ dont la construction est effectuée en \cite[\S2]{MSt}.
Résumons-en brièvement les prin\-ci\-pa\-les étapes. 

D'abord, on part d'une strate simple $[\La,n,0,\b]$ dans la $\F$-algèbre de
matrices $\Mat_m(\D)$ 
et d'un caractère sim\-ple $\t\in\Cc(\La,0,\b)$ d'un sous-groupe ouvert 
compact $\H^1=\H^1(\b,\La)$ de $\G$.
Il y a un sous-groupe ouvert compact $\J^1=\J^1(\b,\La)$ de $\G$ contenant 
et normalisant $\H^1$, et possédant une unique re\-pré\-sentation irréductible $\eta$ 
dont la restriction à $\H^1$ contienne $\t$.

La représentation $\eta$ se prolonge en une représentation irréductible $\k$ d'un 
sous-groupe ouvert compact $\J=\J(\b,\La)$ de $\G$ contenant 
et normalisant $\J^1$, de même ensemble d'entrelacement que $\eta$~;
un tel prolongement $\k$ s'appelle une \textit{$\b$-extension} de $\eta$.

On suppose que $\J\cap\mult\B$ est un sous-groupe compact
maximal de $\mult\B$.
On fixe un isomorphisme d'algèbres entre le centralisateur $\B$ de $\E=\F[\b]$ 
dans $\Mat_m(\D)$ et une $\E$-algèbre $\Mat_{m'}(\D')$ pour un $m'\>1$ 
et une $\E$-algèbre 
à division centrale $\D'$ convenables, iden\-ti\-fiant $\J\cap\mult\B$ au
sous-groupe compact maximal standard de $\GL_{m'}(\D')$.

Le groupe $\J$ est égal à $(\J\cap\mult\B)\J^1$, et on a des isomorphismes de 
groupes~: 
\begin{equation*}
\label{iso}
\J/\J^1 \simeq (\J\cap\mult\B)/(\J^1\cap\mult\B) \simeq \GL_{m'}(\kk_{\D'}),
\end{equation*}
le second étant induit par l'isomorphisme de $\E$-algèbres fixé précédemment. 
Notons $\GB$ ce dernier groupe et fixons une re\-présentation irréductible 
cuspidale $\s$ de $\GB$.
Elle définit, par inflation, une re\-pré\-sen\-tation irréductible de $\J$ triviale sur $\J^1$,
encore notée $\s$. 
Alors la paire $(\J,\k\otimes\s)$ est un type simple maximal de $\G$, et tous 
sont construits de cette façon.

Soit $\rho$ une $\R$-représentation irréductible cuspidale de $\G$ dont la 
classe inertielle $[\G,\rho]$
corres\-pond à la classe de conjugaison d'un type simple maximal $(\J,\l)$. 
Le groupe de Galois de $\kk_{\D'}$ sur $\kk_{\E}$ agit sur les représentations
de $\GB$~; on note~:
\begin{equation*}
s(\rho) = s(\s)
\end{equation*}
l'ordre du stabilisateur de $\s$ dans ce groupe de Galois.
Quand $\G$ est deployé, \ie quand $\D$ est égale à $\F$, 
ce groupe de Galois est trivial et on a toujours $s(\rho)=1$.

Notons $n(\rho)$ 
le nombre de caractères non ramifiés $\chi$ de $\G$ tels que la 
représentation tordue $\rho\chi$ soit isomorphe à $\rho$, 
et $f(\rho)$ le quotient de $md$ par l'indice de ramification 
de $\E$ sur $\F$.

Ces trois entiers sont indépendants des choix 
effectués dans la construction de $(\J,\l)$~; ils ne dépendent 
que de la classe inertielle de $\rho$.
Lorsque $\R$ est de caractéristique $0$, ils sont liés par la relation~: 
\begin{equation*}
n(\rho)s(\rho) = f(\rho).
\end{equation*}
Lorsque $\R$ est de caractéristique $\ell$ en revanche, 
l'entier $s(\rho)$ divise $f(\rho)$, 
et $n(\rho)$ est le plus grand diviseur de $f(\rho)s(\rho)^{-1}$ 
premier à $\ell$.

Lorsque $\R$ est de caractéristique $\ell$,
on a introduit au paragraphe \ref{DEFK} un entier $\d(\rho)$~: 
le nombre de termes dans le support supercuspidal de $\rho$. 
De façon analogue, % d'après \cite{??}, 
% citer Hiss, ou Cabanes, ou notre papier finimodl
il y a un unique entier naturel~:
\begin{equation*}
\d(\s)
\end{equation*}
tel que $\s$ apparaisse comme sous-quotient de l'induite parabolique d'une 
représentation irré\-duc\-tible supercuspidale du sous-groupe de Levi standard 
$\GL_{r}(\kk_{\D'})\times\dots\times\GL_{r}(\kk_{\D'})$ avec $r\d(\s)=m'$.

\begin{lemm}
On a $\d(\rho)=\d(\s)$.
\end{lemm}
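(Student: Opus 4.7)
L'id\'ee est d'utiliser la compatibilit\'e de la correspondance \eqref{sichuan} avec l'induction parabolique pour ramener le calcul du support supercuspidal de $\rho$ \`a celui du support supercuspidal de $\s$ dans le groupe fini $\GB=\GL_{m'}(\kk_{\D'})$.

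Fixons un entier $a\>1$ divisant $m$ et consid\'erons le sous-groupe de Levi standard $\M=\GL_r(\D)^a$ de $\G$ avec $ra=m$. La construction rappel\'ee au paragraphe \ref{PARA21}, appliqu\'ee \`a la m\^eme strate simple $[\La,n,0,\b]$ et au m\^eme caract\`ere simple $\t$, fournit par descente \`a $\M$ un type simple maximal de la forme $(\J_\M,\k_\M\otimes\s_\M)$, o\`u $\s_\M$ est une repr\'esentation irr\'eductible cuspidale du sous-groupe de Levi $\GL_{r'}(\kk_{\D'})^a$ de $\GB$, avec $r'a=m'$. L'\'egalit\'e du nombre $a$ de blocs des deux c\^ot\'es vient de ce que la $\E$-alg\`ebre \`a division $\D'$ ne d\'epend que de la strate simple, donc est commune \`a $\G$ et \`a $\M$, et que l'identification du paragraphe \ref{PARA21} entre $\J_\M\cap\mult{\B_\M}$ et le sous-groupe compact maximal standard de $\GL_{r'}(\D')^a$ est celle induite par restriction de l'isomorphisme fix\'e pour $\G$.

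Je ferais ensuite appel au Th\'eor\`eme~6.14 de \cite{MSc}, selon lequel $\rho$ est sous-quotient d'une induite parabolique depuis $\M$ d'une repr\'esentation irr\'eductible supercuspidale de la forme $\rho_1\boxtimes\dots\boxtimes\rho_a$ si et seulement si $\s$ est sous-quotient de l'induite parabolique depuis $\GL_{r'}(\kk_{\D'})^a$ dans $\GB$ d'une repr\'esentation irr\'eductible supercuspidale analogue $\s_1\boxtimes\dots\boxtimes\s_a$. Par minimalit\'e, le plus petit entier $a\>1$ pour lequel ceci est possible vaut $\d(\rho)$ d'un c\^ot\'e et $\d(\s)$ de l'autre, ce qui \'etablit l'\'egalit\'e cherch\'ee.

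Le point d\'elicat consiste \`a justifier rigoureusement cette correspondance bijective entre supports supercuspidaux, et en particulier le fait que la partition $m=ra$ se refl\`ete exactement en la partition $m'=r'a$ via la th\'eorie des types couvrants. Une fois cette compatibilit\'e acquise, la conclusion est imm\'ediate.
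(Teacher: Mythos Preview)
Your strategy---reduce the supercuspidal support of $\rho$ to that of $\s$ via the compatibility of the type-theoretic correspondence with parabolic induction---is exactly the idea the paper uses. But your implementation has a hidden circularity, and the paper's execution is both simpler and avoids it.

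The issue is your claim that for any $a$ dividing $m$, the descent of the stratum $[\La,n,0,\b]$ to $\M=\GL_r(\D)^a$ yields a quotient $\J_\M/\J_\M^1\simeq\GL_{r'}(\kk_{\D'})^a$ with $r'a=m'$. For this descent to make sense, the field $\E=\F[\b]$ must embed in each block $\Mat_r(\D)$, which forces $g/(d,g)$ to divide $r$; unwinding the relation $m=m'\cdot g/(d,g)$, this is equivalent to $a\mid m'$. But $a\mid m$ does not imply $a\mid m'$ in general, and the fact that $\d(\rho)\mid m'$ is precisely the content of Remarque~\ref{gavroche}, deduced \emph{from} the lemma. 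So you cannot assume it as an input to the proof.

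The paper circumvents this by working in one direction only. It starts from the supercuspidal support $\rho_0^{\otimes\d}$ of $\rho$ (with $\d=\d(\rho)$), picks a maximal simple type $(\J_0,\k_0\otimes\s_0)$ inside $\rho_0$ itself---whose existence is automatic---and then invokes the proof of \cite[Lemme~6.1]{MSc} to arrange that $\s$ occurs in the parabolic induction of $\s_0^{\otimes\d}$ to $\GB$. Supercuspidality of $\s_0$ follows from \cite[Proposition~6.10]{MSc}, and uniqueness of $\d(\s)$ finishes the argument. This bottom-up approach never needs to know in advance that $\d\mid m'$; that divisibility falls out as a corollary. Your top-down descent, and the biconditional formulation over all $a$, are more than what is needed and run into the obstruction above. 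Also, the reference you want is not Th\'eor\`eme~6.14 (which is the classification statement) but Lemme~6.1 and Proposition~6.10 of \cite{MSc}.
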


\begin{proof}
Posons $\d=\d(\rho)$ et définissons un entier $r\>1$ par $\d r=m$.
Il y a une repré\-sen\-ta\-tion irréductible supercuspidale $\rho_0$ et des 
$\R$-caractères non ramifiés
$\chi_1,\dots,\chi_{\d}$ de $\GL_{r}(\D)$ tels que $\rho$ apparaisse comme un 
sous-quotient de l'induite parabolique de 
$\rho_0\chi_1\otimes\dots\otimes\rho_0\chi_k$ à $\G$
(\cite[Théorème 6.14]{MSc}).
Fixons un type simple maximal $(\J_0,\k_0\otimes\s_0)$ contenu dans $\rho_0$. 
D'après, par exemple, la preuve de \cite[Lemme 6.1]{MSc}, 
on peut choisir $\s_0$ de sorte que $\s$ apparaisse comme un sous-quotient de 
l'induite parabolique de 
$\s_0\otimes\dots\otimes\s_0$ à $\GB$.
D'après \cite[Proposition 6.10]{MSc}, la re\-pré\-sentation $\s_0$ est 
supercuspidale.
Par unicité de $\d(\s)$, on en déduit que $\d(\s)=\d$.
\end{proof}

\begin{rema}
\label{gavroche}
On en déduit que $\d(\rho)$ divise $m'$, et pas seulement $m$.
\end{rema}

\subsection{}
\label{PARA22}

Fixons une extension $\kk$ de $\kk_{\D'}$ de degré $m'$, 
et notons $\X$ l'ensemble des $\xt\in\mult\kk$ de degré $m'$ sur 
$\kk_{\D'}$.  
D'après Green \cite{Green}, il y a une application surjective~:
\begin{equation}
\label{green}
\xt\mapsto\boldsymbol{\st}(\xt)
\end{equation}
de $\X$ vers l'ensemble des classes de représen\-ta\-tions irréduc\-ti\-bles 
cuspidales $\ell$-adiques de $\GB$~;
les antécédents de $\boldsymbol{\st}(x)$ sont les conjugués de $x$
sous $\Gal(\kk/\kk_{\D'})$.
Pour $x\in\mult\kk$, notons $[x]$ l'orbite de $x$ sous $\Gal(\kk/\kk_\E)$ et~: 
\begin{equation*}
\deg(x) = {\rm card}([\xt])
\end{equation*}
le degré de $x$ sur $\kk_\E$.
Notons $d'$ le degré réduit de $\D'$ sur $\E$.

\begin{lemm}
\label{lemsm}
Pour $x\in\X$, soit $\st$ la représentation cuspidale lui
correspondant par \eqref{green}.
On a la relation~: 
\begin{equation}
\label{DEGX}
\deg(x) 
% = m'\cdot{\rm card}([\s]) 
= \frac{m'd'}{s(\st)}
\end{equation}
et $s(\st)$ est premier à $m'$.
\end{lemm}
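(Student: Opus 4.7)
Pour la formule \eqref{DEGX}, je partirais de l'observation que l'extension $\kk/\kk_\E$ est cyclique de degr\'e $m'd'$, et que $\kk_{\D'}$ y est l'unique sous-corps de degr\'e $d'$ sur $\kk_\E$. L'hypoth\`ese que $x\in\X$ est de degr\'e $m'$ sur $\kk_{\D'}$ \'equivaut alors \`a l'\'egalit\'e $\kk_{\D'}(x)=\kk$, et la $\Gal(\kk/\kk_{\D'})$-orbite de $x$ est exactement de cardinal $m'$.

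L'\'etape centrale consiste \`a exploiter la compatibilit\'e galoisienne de la param\'etrisation de Green. Le groupe $\Gal(\kk_{\D'}/\kk_\E)$ agit naturellement sur $\GB=\GL_{m'}(\kk_{\D'})$ entr\'ee par entr\'ee, d'o\`u une action par torsion sur les classes de repr\'esentations irr\'eductibles cuspidales. Un fait standard, issu des travaux de Green, est que, via la bijection \eqref{green}, cette action s'identifie \`a l'action du quotient $\Gal(\kk/\kk_\E)/\Gal(\kk/\kk_{\D'})\simeq\Gal(\kk_{\D'}/\kk_\E)$ sur l'ensemble des $\Gal(\kk/\kk_{\D'})$-orbites d'\'el\'ements de $\X$. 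Il en r\'esulte que le stabilisateur de $\st$ dans $\Gal(\kk_{\D'}/\kk_\E)$, d'ordre $s(\st)$, co\"incide avec le stabilisateur de la $\Gal(\kk/\kk_{\D'})$-orbite de $x$ dans ce quotient~; par cons\'equent, la $\Gal(\kk/\kk_\E)$-orbite de $x$ se d\'ecompose en exactement $d'/s(\st)$ orbites sous $\Gal(\kk/\kk_{\D'})$, toutes de cardinal $m'$, d'o\`u \eqref{DEGX}.

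Pour la primalit\'e de $s(\st)$ \`a $m'$, j'utiliserais l'\'egalit\'e $\kk_\E(x)\cdot\kk_{\D'}=\kk_{\D'}(x)=\kk$~: dans l'extension cyclique $\kk/\kk_\E$, cela impose $\mathrm{lcm}(\deg(x),d')=m'd'$. En posant $g=\gcd(\deg(x),d')$, il vient $\deg(x)=m'g$, puis $g=d'/s(\st)$ d'apr\`es la premi\`ere assertion. L'\'egalit\'e $g=\gcd(m'g,d')=g\cdot\gcd(m',s(\st))$ force alors $\gcd(m',s(\st))=1$.

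L'obstacle principal tient \`a la formulation pr\'ecise de la compatibilit\'e galoisienne sous \eqref{green}. Si elle n'est pas directement disponible sous cette forme dans les r\'ef\'erences utilis\'ees, il faudra la v\'erifier \`a la main, par exemple en se ramenant \`a la construction de Green (ou \`a sa variante Deligne-Lusztig) o\`u les repr\'esentations cuspidales se param\'etrent par des caract\`eres r\'eguliers d'un tore anisotrope isomorphe \`a $\mult\kk$, l'action naturelle de $\Gal(\kk/\kk_\E)$ sur ces caract\`eres \'etant manifestement compatible avec l'action galoisienne sur les repr\'esentations induites.
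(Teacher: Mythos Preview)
Your proposal is correct and rests on the same key ingredient as the paper's proof: the Galois compatibility of Green's parametrization, i.e., the fact that the action of $\Gal(\kk_{\D'}/\kk_\E)$ on cuspidal classes corresponds, via \eqref{green}, to its action on $\Gal(\kk/\kk_{\D'})$-orbits in $\X$. The paper phrases this concretely as the equivalence $\st^{\phi^k}\simeq\st \Leftrightarrow \exists\, l:\ x^{q_\E^k}=x^{q_{\D'}^l}$, and then computes everything arithmetically by comparing the orders of $q_\E$ and $q_{\D'}=q_\E^{d'}$ in $(\ZZ/n\ZZ)^\times$ (where $n$ is the order of $x$): this yields directly ${\rm card}([\st])=d'/s(\st)=(d',\deg(x))$ and $m'=\deg(x)/(d',\deg(x))$, whence both assertions. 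Your route is the field-theoretic repackaging of the same computation: you replace the order calculation by an orbit decomposition (the $\Gal(\kk/\kk_\E)$-orbit of $x$ splits into $d'/s(\st)$ suborbits of size $m'$) and the coprimality by an lcm argument in the cyclic extension $\kk/\kk_\E$. Both derivations are equally short; the paper's version has the slight advantage of not needing to articulate the compatibility statement abstractly, since the explicit Frobenius criterion is immediate from the definition of \eqref{green}.
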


\begin{proof}
Notons $\phi$ l'automorphisme de Frobenius $x\mapsto x^{q_\E}$.
Pour $k\in\ZZ$, on a~:
\begin{equation*}
\st^{\phi^k}\simeq\st
\quad\Leftrightarrow\quad
\text{il existe $l\in\ZZ$ tel que $x^{q_\E^k}=x^{q_{\D'}^l}$}.
\end{equation*}
Si l'on note $[\st]$ l'orbite de $\st$ sous $\Gal(\kk_{\D'}/\kk_\E)$, 
on en déduit que~:
\begin{equation*}
{\rm card}([\st]) 
= \frac{d'}{s(\st)}
= (d',\deg(x)).
\end{equation*}
Par ailleurs, si $n$ est l'ordre de $x$,
alors l'ordre de $q_\E$ dans $(\ZZ/n\ZZ)^\times$ est $\deg(x)$, 
tandis que l'ordre de $q_{\D'}$ dans $(\ZZ/n\ZZ)^\times$ est $m'$.
On en déduit le résultat voulu. 
\end{proof}

\begin{coro}
\label{eponine}
L'entier $s(\st)$ est premier à $m$.
\end{coro}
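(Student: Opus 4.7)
Par le lemme \ref{lemsm}, $s(\st)$ et $m'$ sont premiers entre eux; il suffit donc de montrer que $s(\st)$ et $m/m'$ sont aussi premiers entre eux. Par d\'efinition, $s(\st)$ est l'ordre d'un sous-groupe du groupe cyclique $\Gal(\kk_{\D'}/\kk_\E)$, de cardinal $d'$, donc $s(\st)$ divise $d'$.

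Pour contr\^oler $m/m'$, j'invoquerais le calcul classique de l'indice de Schur de $\D \otimes_\F \E$. Comme $\B \cong \Mat_{m'}(\D')$ est le centralisateur de $\E$ dans $\Mat_m(\D)$, la th\'eorie de Morita entra\^{\i}ne que $\D'$ et $\D \otimes_\F \E$ ont le m\^eme indice de Schur sur $\E$. L'application $\mathrm{Br}(\F) \to \mathrm{Br}(\E)$ \'etant la multiplication par $[\E:\F]$ sur $\QQ/\ZZ$, on obtient $d' = d/u$ o\`u $u = (d, [\E:\F])$. Combin\'e \`a la relation de dimension $md = [\E:\F] m' d'$ (qui r\'esulte du calcul des dimensions $\dim_\E \B = m'^2 d'^2 = m^2 d^2/[\E:\F]^2$), cela donne $m/m' = [\E:\F]/u$.

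L'observation cl\'e est alors que $d/u$ et $[\E:\F]/u$ sont premiers entre eux, par la d\'efinition m\^eme de $u$. Puisque $s(\st)$ divise $d' = d/u$, il s'ensuit que $s(\st)$ et $m/m'$ sont premiers entre eux, ce qui conclut. L'unique \'etape n\'ecessitant quelque attention est le calcul de l'indice de Schur de $\D \otimes_\F \E$, mais il s'agit d'un r\'esultat tout \`a fait classique pour les alg\`ebres simples centrales sur les corps locaux, donc je ne pr\'evois pas d'obstacle r\'eel.
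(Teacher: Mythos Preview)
Your proposal is correct and follows essentially the same route as the paper. With $g=[\E:\F]$, the paper simply records the relations $d'=d/(d,g)$ and $m=m'\cdot g/(d,g)$ and concludes exactly as you do (since $s(\st)\mid d'$ is prime to $m'$ and $d/(d,g)$ is prime to $g/(d,g)$); your Brauer-group justification of these two relations is sound but unnecessary here, as they are standard facts about centralizers in central simple algebras over local fields and are used without comment in the paper.
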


\begin{proof}
Notons $g$ le degré de $\E$ sur $\F$, de sorte que~:
\begin{equation*}
d' = \frac{d}{(d,\gg)},
\quad
m = m' \cdot \frac{g}{(d,\gg)}.
\end{equation*}
L'entier $s(\st)$ divise $d'$, et il est premier à $m'$ d'après le lemme 
\ref{lemsm}~; le résultat s'ensuit. 
\end{proof}

D'après Dipper et James \cite{Dippd2,DJ,James}, si $x\in\X$,
la réduction modulo $\ell$ de $\boldsymbol{\st}(\xt)$ est 
irréductible et cuspidale, et ne dépend que de la partie $\ell$-régulière 
de $\xt$, \ie de l'unique $y\in\mult\kk$ tel que l'ordre de $\xt y^{-1}$ soit 
une puis\-san\-ce de $\ell$.
Ceci définit une application surjective~:
\begin{equation}
\label{james}
y\mapsto\boldsymbol{\s}(y)
\end{equation}
de l'ensemble $\Y$ des 
parties $\ell$-régulières des éléments de $\X$ vers celui des (classes de)
représen\-ta\-tions irréductibles cuspidales $\ell$-modulaires de $\GB$~;
l'ensemble des antécédents de $\boldsymbol{\s}(y)$ est l'orbite de $y$ sous 
le groupe de Galois de $\kk$ sur $\kk_{\D'}$.

\begin{lemm}
Pour $y\in\Y$, soit $\s$ la représentation cuspidale lui
correspondant par \eqref{james}.
On a la relation~: 
\begin{equation}
\label{DEGY}
\deg(y) 
% = m''\cdot{\rm card}([\s]) 
= \frac{m'}{\d(\s)}\cdot\frac{d'}{s(\s)}
\end{equation}
et $s(\s)$ est premier à $m'\d(\s)^{-1}$.
\end{lemm}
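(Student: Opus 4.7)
Je suivrai la strat\'egie du lemme \ref{lemsm}, en substituant la param\'etrisation de Dipper-James \eqref{james} \`a celle de Green \eqref{green}. Le point cl\'e est que le stabilisateur de $y$ dans $\Gal(\kk/\kk_\E)$ se calcule en deux temps via la suite exacte
\begin{equation*}
1\to\Gal(\kk/\kk_{\D'})\to\Gal(\kk/\kk_\E)\to\Gal(\kk_{\D'}/\kk_\E)\to 1,
\end{equation*}
chacun des deux morceaux correspondant \`a un des deux invariants $\d(\s)$ et $s(\s)$.

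D'abord, je commencerai par relier le degr\'e de $y$ sur $\kk_{\D'}$ \`a $\d(\s)$. La fibre de \eqref{james} au-dessus de $\s$ \'etant l'orbite de $y$ sous $\Gal(\kk/\kk_{\D'})$, ce degr\'e est $m'$ divis\'e par l'ordre du stabilisateur de $y$ dans $\Gal(\kk/\kk_{\D'})$. Or la param\'etrisation de Dipper-James est compatible au support supercuspidal~: si $\s$ correspond \`a $y$ dans $\GL_{m'}(\kk_{\D'})$, la repr\'esentation supercuspidale sous-jacente de $\GL_r(\kk_{\D'})$, avec $r\d(\s)=m'$, correspond au m\^eme \'el\'ement $y$, d'o\`u un degr\'e \'egal \`a $r=m'/\d(\s)$, et le stabilisateur en question est d'ordre $\d(\s)$. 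Par \'equivariance galoisienne de \eqref{james}, l'image du stabilisateur de $y$ dans $\Gal(\kk/\kk_\E)$ par la projection sur $\Gal(\kk_{\D'}/\kk_\E)$ co\"{\i}ncide avec le stabilisateur de $\s$, donc est d'ordre $s(\s)$. Le stabilisateur de $y$ dans $\Gal(\kk/\kk_\E)$ est donc d'ordre $\d(\s)s(\s)$, ce qui donne $\deg(y)=m'd'/(\d(\s)s(\s))$, soit \eqref{DEGY}.

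Pour la primalit\'e de $s(\s)$ \`a $m'/\d(\s)$, je reprendrai l'argument arithm\'etique du lemme \ref{lemsm}. Notant $n_0$ l'ordre de $y$ (premier \`a $\ell$) et $f=\deg(y)$, les degr\'es $f$ et $m'/\d(\s)$ sont les ordres respectifs de $q_\E$ et de $q_{\D'}=q_\E^{d'}$ modulo $n_0$, ce qui fournit l'\'egalit\'e $m'/\d(\s)=f/(f,d')$ par un argument classique sur les ordres dans un groupe cyclique. Combin\'ee \`a \eqref{DEGY}, cette identit\'e livre aussi $s(\s)=d'/(f,d')$, et la coprimalit\'e cherch\'ee r\'esulte de ce que $f/(f,d')$ et $d'/(f,d')$ sont premiers entre eux par d\'efinition du pgcd.

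L'obstacle principal sera de justifier proprement d'une part l'\'equivariance galoisienne de la param\'etrisation \eqref{james}, et d'autre part sa compatibilit\'e au support supercuspidal des repr\'esentations cuspidales $\ell$-modulaires de $\GL_{m'}(\kk_{\D'})$~; ces deux points rel\`event de la th\'eorie de Dipper-James appliqu\'ee au couple $(\GL_{m'}(\kk_{\D'}),\GL_r(\kk_{\D'}))$. Une fois ces faits admis, tout le reste se ram\`ene aux deux calculs \'el\'ementaires ci-dessus.
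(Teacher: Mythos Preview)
Your proposal is correct and follows essentially the same approach as the paper. The paper's proof is extremely terse: it records that the $\Gal(\kk/\kk_{\D'})$-orbit of $y$ has cardinality $m''=m'/\d(\s)$ and then simply invokes the argument of the preceding lemma (Lemma~\ref{lemsm}) with $m''$ in place of $m'$; the two ingredients you isolate (compatibility of \eqref{james} with the supercuspidal support, and Galois equivariance of \eqref{james}) are exactly what this reduction uses, and your arithmetic argument for the coprimality is the same as the one in Lemma~\ref{lemsm}. Your use of the exact sequence of Galois groups to obtain \eqref{DEGY} directly from $|{\rm Stab}(y)|=\d(\s)s(\s)$ is a slightly cleaner packaging of the same computation, but not a genuinely different route.
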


\begin{proof}
Si l'on note $m''$ le cardinal de l'orbite de $y$ sous 
$\Gal(\kk/\kk_{\D'})$, alors l'entier $\d(\s)$ défini à la section \ref{SECTION2} 
vérifie la relation $m'=\d(\s)\cdot m''$.
Comme dans le lemme précédent, on en déduit la relation \eqref{DEGY} et que 
$s(\s)$ est premier à $m''=m'\d(\s)^{-1}$.
\end{proof}

Soit $x\in\X$, et soit $y\in\Y$ la partie $\ell$-régulière de $x$.
Soit $\st$ la représentation cus\-pidale $\ell$-adi\-que 
correspondant à $x$ et $\s$ sa réduction modulo $\ell$, qui correspond à $y$.
On pose~:
\begin{eqnarray*}
a(\st) &=& \frac{s(\s)}{s(\st)}, \\
\ww(\st) &=& \frac{\deg(x)}{\deg(y)} 
= \d(\s)a(\st).
\end{eqnarray*}
On a les propriétés suivantes. 

\begin{lemm}
\label{CN}
On a $(\ww(\st),m')=\d(\s)$ et $(\ww(\st),s(\s))=a(\st)$.
\end{lemm}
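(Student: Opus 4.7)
The plan is to chase the definitions and combine the three coprimality/divisibility facts already established in the two preceding lemmas and in Remark~\ref{gavroche}. Specifically, I would use:
\begin{enumerate}
\item $\d(\s)$ divise $m'$ (Remarque~\ref{gavroche})~;
\item $s(\st)$ est premier � $m'$ (Lemme~\ref{lemsm})~;
\item $s(\s)$ est premier � $m'\d(\s)^{-1}$ (le second lemme de~\ref{PARA22})~;
\item $a(\st)=s(\s)/s(\st)$ divise $s(\s)$, par d�finition m�me.
\end{enumerate}

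For the first identity $(\ww(\st),m')=\d(\s)$, I would write $\ww(\st)=\d(\s)a(\st)$ and, using (1), factor out $\d(\s)$ from the gcd:
\begin{equation*}
(\d(\s)a(\st),m') = \d(\s)\cdot (a(\st),m'\d(\s)^{-1}).
\end{equation*}
By (4), $a(\st)$ divides $s(\s)$, and by (3) $s(\s)$ is coprime to $m'\d(\s)^{-1}$, so the second factor is $1$.

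For the second identity $(\ww(\st),s(\s))=a(\st)$, I would rewrite $s(\s)=a(\st)s(\st)$, so that
\begin{equation*}
(\d(\s)a(\st),s(\s))=(\d(\s)a(\st),a(\st)s(\st))=a(\st)\cdot(\d(\s),s(\st)).
\end{equation*}
By (1) and (2), $\d(\s)$ divides $m'$ while $s(\st)$ is coprime to $m'$, so $(\d(\s),s(\st))=1$, giving the claim.

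There is no real obstacle: both identities reduce to extracting a common factor from a gcd and then invoking the coprimality statements proved just above. The only subtle point to state cleanly is the factorisation $\gcd(cA,cB')=c\gcd(A,B'/c)$ when $c\mid B'$, which is what legitimates pulling $\d(\s)$ (resp.\ $a(\st)$) out of each gcd.
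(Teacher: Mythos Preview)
Your proof is correct and essentially identical to the paper's. The paper proves the two identities in the opposite order but uses exactly the same ingredients: $s(\st)$ premier à $m'$ donne $(\d(\s),s(\st))=1$, donc en multipliant par $a(\st)$ on obtient la seconde identité~; $s(\s)$ premier à $m'\d(\s)^{-1}$ donne $(a(\st),m'\d(\s)^{-1})=1$, donc en multipliant par $\d(\s)$ on obtient la première.
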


\begin{proof}
Comme $s(\st)$ est premier à $m'$, il est aussi premier à $\d(\s)$. 
Multipliant par $a(\rt)$, on en déduit que $(\ww(\st),s(\s))=a(\st)$.
Ensuite, $m'\d(\s)^{-1}$ étant premier à $s(\s)$, il est aussi premier à 
$a(\rt)$.
Multipliant par $\d(\s)$, on en déduit que $(\ww(\st),m')=\d(\s)$.
\end{proof}

Notons $\e(\s)$ l'ordre de~:
\begin{equation}
\label{QEKDY}
q_\E^{\deg(y)\cdot\d(\s)}
\end{equation}
dans $(\ZZ/\ell\ZZ)^\times$. 

\begin{lemm}
\label{CN2}
Si $x\neq y$,
le plus grand diviseur de $a(\st)$ premier à $\ell$ est $\e(\s)$.
\end{lemm}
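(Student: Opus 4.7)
The plan is to compute the $\ell'$-part of $a(\st)$ from the formulas \eqref{DEGX} and \eqref{DEGY}, and to compare it with $\e(\s)$, via a simultaneous CRT decomposition of the multiplicative orders governing both $\deg$ and $\d(\s)$.

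Let $n$ and $n'$ be the orders of $x$ and $y$ respectively. Since $x\neq y$ and $y$ is the $\ell$-regular part of $x$, one has $n=n'\ell^t$ with $(n',\ell)=1$ and $t\>1$. By the Chinese Remainder Theorem, for every integer $Q$ coprime to $n$,
\begin{equation*}
\mathrm{ord}_n(Q)=\mathrm{lcm}\bigl(\mathrm{ord}_{n'}(Q),\mathrm{ord}_{\ell^t}(Q)\bigr).
\end{equation*}
Taking $Q=q_\E$, using $\deg(x)=\mathrm{ord}_n(q_\E)$, $\deg(y)=\mathrm{ord}_{n'}(q_\E)$ and the standard fact that $\mathrm{ord}_{\ell^t}(q_\E)=\e\ell^u$ for some $u\>0$ (where $\e=\mathrm{ord}_\ell(q_\E)$), passing to $\ell'$-parts and invoking $\deg(x)/\deg(y)=\d(\s)a(\st)$ yields
\begin{equation*}
\d(\s)^{(\ell')}\cdot a(\st)^{(\ell')}=\e/\gcd(\e,\deg(y)).
\end{equation*}
The same argument applied to $Q=q_{\D'}=q_\E^{d'}$, using $\mathrm{ord}_n(q_{\D'})=m'$ and $\mathrm{ord}_{n'}(q_{\D'})=m'/\d(\s)$, gives $\d(\s)^{(\ell')}=e'/\gcd(e',m'/\d(\s))$ with $e'=\e/\gcd(\e,d')$.

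It then remains to combine these two identities with the relation $\deg(y)s(\s)=(m'/\d(\s))d'$ (from \eqref{DEGY}, using $s(\s)\mid d'$) to deduce $a(\st)^{(\ell')}=\e/\gcd(\e,\deg(y)\d(\s))=\e(\s)$. I would argue prime by prime: for a prime $p\neq\ell$, setting
\begin{equation*}
\alpha=v_p(\e),\quad\beta=v_p(m'/\d(\s)),\quad\gamma=v_p(d'/s(\s)),\quad\mu=v_p(s(\s)),
\end{equation*}
the formulas above translate into $v_p(\d(\s))=\max(0,\alpha-\beta-\gamma-\mu)$ and $v_p(\deg(y))=\beta+\gamma$, so that both $v_p(a(\st)^{(\ell')})$ and $v_p(\e(\s))$ become short piecewise-linear expressions in $\alpha,\beta,\gamma,\mu$. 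A case distinction on the sign of $\alpha-\beta-\gamma$ and on whether it exceeds $\mu$ then yields the common value $\min(\mu,\max(0,\alpha-\beta-\gamma))$ on both sides, concluding the proof (the case $p=\ell$ being trivial since $\e$ is prime to $\ell$).

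The main difficulty is precisely this final simplification: although each ingredient is elementary, the four parameters $\d(\s)$, $s(\s)$, $m'/\d(\s)$, $d'$ interweave in the resulting gcds, and a purely symbolic gcd-identity appears cumbersome; passing to $p$-adic valuations (turning each gcd into a min and each lcm into a max) reduces the equality to a finite case check that is easily done by hand.
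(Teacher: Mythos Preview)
Your argument is correct, and the case analysis at the end does go through (with $A=\alpha-\beta-\gamma$ one checks that both sides equal $0$, $A$, or $\mu$ according as $A\le 0$, $0<A\le\mu$, or $A>\mu$). However, the route is considerably longer than the paper's, and the detour through two separate CRT computations (for $q_\E$ and for $q_{\D'}$) followed by a recombination prime-by-prime is precisely what the paper avoids.

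The paper's proof is essentially two lines. Rather than computing the $\ell'$-parts of $w(\st)$ and of $\d(\s)$ separately and then dividing, it exhibits $a(\st)$ directly as a multiplicative order: since $\mathrm{ord}_n(q_\E)=\deg(x)$ and $\deg(y)\d(\s)$ divides $\deg(x)$ with quotient $a(\st)$, the element $Q=q_\E^{\deg(y)\d(\s)}$ has order exactly $a(\st)$ in $(\ZZ/n\ZZ)^\times$. But $Q\equiv 1\pmod{n'}$ (because already $q_\E^{\deg(y)}\equiv 1\pmod{n'}$), so the order of $Q$ in $(\ZZ/n\ZZ)^\times$ coincides with its order in $(\ZZ/\ell^t\ZZ)^\times$; the $\ell'$-part of the latter is the order of $Q$ in $(\ZZ/\ell\ZZ)^\times$, which is $\e(\s)$ by definition.

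What you gain by your approach is an explicit formula $v_p(a(\st))=\min\bigl(v_p(s(\s)),\max(0,v_p(\e)-v_p(\deg(y)))\bigr)$ for each prime $p\neq\ell$, which is more than the lemma asserts. What the paper's approach buys is that no such bookkeeping is needed: the choice of the exponent $\deg(y)\d(\s)$ kills the $(\ZZ/n'\ZZ)^\times$-component outright, so there is nothing to recombine.
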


\begin{proof}
Dans $(\ZZ/n\ZZ)^\times$ (où $n$ désigne l'ordre de $x$), 
l'ordre de \eqref{QEKDY} est~:
\begin{equation*}
\frac{\deg(x)}{(\deg(y) \d(\s),\deg(x))}=a(\st).
\end{equation*}
Comme $x\neq y$, l'entier $n$ est divisible par $\ell$.
En projetant sur $(\ZZ/\ell\ZZ)^\times$, on en déduit que le plus grand 
diviseur de $a(\st)$ premier à $\ell$ est $\e(\s)$.
\end{proof}

\section{Comptage}

\subsection{Preuve du théorème \ref{CongCusp}}

Soit $\rt$ une représentation irréductible cuspidale $\ell$-adique entière de $\G$, 
et soit $\Oo(\rt)$ l'ensemble des classes inertielles $[\G,\rt']$ 
de représentations irréductibles cuspi\-dales $\ell$-adiques de $\G$ 
congrues à $[\G,\rt]$ modulo $\ell$, \ie telles que~:
\begin{equation*}
\r_\ell([\G,\rt'])=\r_\ell([\G,\rt]).
\end{equation*}
Fixons un type simple maxi\-mal $(\J,\lt)$ dans la classe de $\G$-conjugaison 
correspondant à $[\G,\rt]$,
et notons $\l$ la réduction de $\lt$ modulo $\ell$.
Alors $(\J,\l)$ est un $\flb$-type simple maximal correspondant à la classe inertielle 
de la représentation $\rho$ apparaissant dans \eqref{redcusp}, et l'entier 
$a=a(\rt)$ est l'indice du $\G$-normalisateur de $(\J,\lt)$ dans celui de 
$(\J,\l)$
(voir \cite[\S3]{MSt}).

L'ensemble $\Oo(\rt)$ s'identifie donc à l'ensemble des classes de
$\G$-conjugaison de $\qlb$-types simples maximaux $(\J',\lt')$ 
tels que, si l'on note $\l'$ la réduction de $\lt'$ modulo $\ell$, on ait~:
\begin{enumerate}
\item 
les $\flb$-types simples maximaux $(\J',\l')$ et $(\J,\l)$ sont conjugués sous
$\G$~; 
\item
on a $(\N_\G(\J',\l'):\N_\G(\J',\lt'))=(\N_\G(\J,\l):\N_\G(\J,\lt))$~;
\end{enumerate}
où $\N_\G(\J,\l)$ désigne le normalisateur de $(\J,\l)$ dans $\G$.
Quitte à conjuguer,
on peut donc supposer que $\J'=\J$ et $\l'=\l$~; 
l'ensemble $\Oo(\rt)$ s'identifie donc à l'ensemble $\Tt(\J,\lt)$ des classes de 
$\N_\G(\J,\l)$-conjugaison de $\qlb$-types simples maximaux $(\J,\lt')$ de $\G$ 
tels que~:
\begin{enumerate}
\item
les représentations $\lt'$ et $\lt$ sont congruentes modulo $\ell$~;
\item
les paires $(\J,\lt')$ et $(\J,\lt)$ ont le même normalisateur dans $\G$.
\end{enumerate}
Fixons une décomposition de $\lt$ sous la forme $\kt\otimes\st$
et un isomorphisme de groupes de $\J/\J^1$ sur $\GB$
(voir la section \ref{SECTION2}). 
Le foncteur~:
\begin{equation*}
\widetilde{\tau}\mapsto\kt\otimes\widetilde{\tau}
\end{equation*}
définit une bijection entre les représentations irréducti\-bles 
cus\-pidales de $\GB$ et les 
ty\-pes simples maxi\-maux de $\G$ définis sur $\J$ et contenant $\kt$.
D'après \cite[Theorem 7.2]{SeSt2},
sa réciproque induit une bijection de $\Tt(\J,\lt)$ sur l'en\-semble $\Cc(\st)$ des 
orbites, sous l'action du groupe de Galois $\Gal(\kk_{\D'}/\kk_{\E})$, 
de représentations irréducti\-bles cus\-pidales $\st'$
de $\GB$ telles que~: 
\begin{enumerate}
\item
les représentations $\st'$ et $\st$ sont congruentes modulo $\ell$~;
\item 
les orbites de $\st'$ et de $\st$ sous $\Gal(\kk_{\D'}/\kk_{\E})$ ont le même cardinal.
\end{enumerate}
Si $\xt\in\X$ correspond à $\st$, alors \eqref{green} 
induit une bijection entre $\Cc(\st)$ et l'ensemble $\EuScript{K}(\xt)$ 
des orbites des éléments $\xt'\in\X$, sous le groupe de 
Galois $\Ga$ de $\kk$ sur $\kk_\E$, tels que~:
\begin{enumerate}
\item
$x'$ et $x$ ont la même partie $\ell$-régulière~;
\item
les $\Ga$-orbites de $x'$ et de $x$ ont le même cardinal,
\ie que $\deg(\xt')=\deg(x)$.
\end{enumerate}
Remarquons que la condition 2 ci-dessus signifie que $\xt$, $\xt'$ ont le 
même stabilisateur dans $\Ga$.
En prenant l'intersection avec $\Gal(\kk/\kk_{\D'})$, on voit que tout 
$x'\in\mult\kk$ vérifiant la condition 2 appartient automatiquement à $\X$.
On obtient finalement une bijection entre $\Oo(\rt)$ et $\EuScript{K}(\xt)$~; 
on a donc prouvé le résul\-tat suivant. 

\begin{prop}
L'ensemble $\Oo(\rt)$ est fini, et son cardinal $t(\rt)$ est 
le nombre de $\Ga$-orbites des $\xt'\in\mult\kk$ tels que 
$x,x'$ ont la même partie $\ell$-régulière et le même degré 
sur $\kk_\E$.
% $\deg(\xt)=\deg(x')$.
\end{prop}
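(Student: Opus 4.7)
The plan is to construct an explicit chain of bijections running from $\Oo(\rt)$ down to the set of $\Ga$-orbits described, and to read off finiteness at the bottom from the finiteness of $\mult\kk$. I would proceed in three stages.

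First, I would use the parametrization \eqref{sichuan} to translate from inertial classes to $\G$-conjugacy classes of maximal simple types. Having fixed a $\qlb$-type $(\J,\lt)$ corresponding to $[\G,\rt]$ with reduction $(\J,\l)$ corresponding to $[\G,\rho]$, an inertial class $[\G,\rt']$ congruent to $[\G,\rt]$ is represented by a maximal simple type $(\J',\lt')$ whose reduction $(\J',\l')$ is $\G$-conjugate to $(\J,\l)$ and whose normalizer index matches that of $(\J,\lt)$. After a conjugation reduction, this identifies $\Oo(\rt)$ with the set $\Tt(\J,\lt)$ of $\N_\G(\J,\l)$-conjugacy classes of lifts of $(\J,\l)$ sharing its normalizer in~$\G$.

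Second, I would decompose $\lt=\kt\otimes\st$, freeze the $\b$-extension $\kt$, and transport the problem to the finite reductive quotient $\GB=\GL_{m'}(\kk_{\D'})$. The functor $\widetilde\tau\mapsto\kt\otimes\widetilde\tau$ gives a bijection between cuspidal representations of $\GB$ and maximal simple types on $\J$ containing $\kt$; invoking \cite[Theorem~7.2]{SeSt2} then translates $\N_\G(\J,\l)$-conjugacy into the natural $\Gal(\kk_{\D'}/\kk_\E)$-action and identifies $\Tt(\J,\lt)$ with the set $\Cc(\st)$ of Galois orbits of cuspidal $\ell$-adic representations $\st'$ of $\GB$ whose reduction mod $\ell$ equals $\s$ and whose Galois orbit has the same cardinality as that of $\st$.

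Finally, I would use Green's parametrization \eqref{green} together with the Dipper--James description of reduction mod $\ell$ to translate $\Cc(\st)$ into $\Ga$-orbits of elements $\xt'\in\mult\kk$ sharing the $\ell$-regular part of $x$ and having the same $\Ga$-orbit cardinality, i.e.\ the same degree over $\kk_\E$. A short verification shows that the degree condition forces $\xt'$ to lie in $\X$ automatically: if $x,x'$ have the same stabilizer in $\Ga$, intersecting with $\Gal(\kk/\kk_{\D'})$ yields the same stabilizer there, so $x'$ has degree $m'$ over $\kk_{\D'}$. Composing the three bijections gives the statement, and finiteness is immediate since $\mult\kk$ is finite. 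The principal obstacle is the equivariance step in the middle: one must be sure that the group action encoded in \cite[Theorem~7.2]{SeSt2} matches, orbit for orbit, the $\N_\G(\J,\l)$-action on types appearing in Stage~1, and in particular that the normalizer-index condition on $(\J,\lt')$ translates into the cardinality condition on the Galois orbit of $\st'$.
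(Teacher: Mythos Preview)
Your proposal is correct and follows essentially the same route as the paper: the same three-stage chain of bijections (inertial classes $\to$ $\N_\G(\J,\l)$-conjugacy classes of types via \eqref{sichuan}, then $\to$ $\Gal(\kk_{\D'}/\kk_\E)$-orbits of cuspidals of $\GB$ via the $\b$-extension and \cite[Theorem~7.2]{SeSt2}, then $\to$ $\Ga$-orbits via Green and Dipper--James), including the same check that the equal-degree condition forces $x'\in\X$. The obstacle you flag is exactly the step the paper discharges by citing \cite[Theorem~7.2]{SeSt2}.
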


Écrivons $\xt$ sous la forme $yz$ où $y$ est d'ordre premier à $\ell$ et
$z$ d'ordre une puissance de $\ell$
(donc $y$ est la partie $\ell$-régulière de $x$).
L'application~:
\begin{equation}
\label{phichi}
z'\mapsto yz'
\end{equation}
est une bijection entre la composante $\ell$-primaire $\P_{\ell}$ de $\mult\kk$ 
et l'ensemble des $x'\in\mult\kk$ dont la partie $\ell$-régulière est $y$.
Étant donnés $z'\in\P_{\ell}$ et $k\in\ZZ$, remarquons que~:
\begin{equation}
\label{ponsonduterrail}
(yz')^{q_{\E}^{k}}=yz'
\quad\Leftrightarrow\quad 
\text{$y^{q_{\E}^{k}}=y$ et $(z')^{q_{\E}^{k}}=z'$}.
\end{equation}
Notons $\kk_{1}$ l'extension de $\kk_\E$ engendrée par $y$.
Pour $z'\in\P_{\ell}$, notons $[\![z']\!]$ son orbite sous le 
groupe de Galois de $\kk$ sur $\kk_{1}$ et~:
\begin{equation*}
\deg_{1}(z') = {\rm card}([\![z']\!])
\end{equation*}
son degré sur $\kk_{1}$.
D'après \eqref{ponsonduterrail}, les éléments $yz$, $yz'$ ont le même 
degré sur $\kk_\E$ si et seulement si~:
\begin{equation}
\label{Bateman}
\deg_{1}(z) = \deg_{1}(z').
\end{equation}
Notons $\Pp(z)$ l'ensemble des $[\![z']\!]$ 
pour $z'\in\P_\ell$ vérifiant \eqref{Bateman}.
On a prouvé le résultat suivant. 

\begin{lemm}
L'application \eqref{phichi} induit une bijection de $\Pp(z)$ sur 
$\EuScript{K}(\xt)$.  
\end{lemm}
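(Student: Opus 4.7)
The plan is to combine the bijection \eqref{phichi} with the equivalence \eqref{ponsonduterrail}, using the uniqueness of the $\ell$-regular/$\ell$-primary decomposition of $\mult\kk$.

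First I would check that $z' \mapsto yz'$ sends $\Pp(z)$ into $\EuScript{K}(\xt)$. By \eqref{phichi}, it identifies $\P_\ell$ with the set of $x' \in \mult\kk$ whose $\ell$-regular part equals $y$. Applied to $z'$ in place of $z$, the biimplication \eqref{ponsonduterrail} characterises the $\Ga$-stabiliser of $yz'$ as the intersection of the $\Ga$-stabilisers of $y$ and of $z'$; in particular $\deg(yz') = \deg(x)$ if and only if the stabiliser of $z'$ in $\Gal(\kk/\kk_1)$ equals that of $z$, that is, $\deg_1(z') = \deg_1(z)$, which is condition \eqref{Bateman}. The remark preceding the lemma guarantees that any $x' \in \mult\kk$ with $\deg(x') = \deg(x)$ automatically lies in $\X$, so $yz'$ genuinely defines a class in $\EuScript{K}(\xt)$.

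Next I would show that the assignment descends to a bijection of orbits. The key point is that every Galois automorphism of $\kk/\kk_\E$ commutes with the canonical decomposition of an element of $\mult\kk$ into its $\ell$-regular and $\ell$-primary parts. Thus if $\sigma \in \Ga$ sends $yz_1$ to $yz_2$ with $z_1, z_2 \in \P_\ell$, uniqueness of this decomposition forces $\sigma(y) = y$ and $\sigma(z_1) = z_2$, so $\sigma \in \Gal(\kk/\kk_1)$; conversely every $\sigma \in \Gal(\kk/\kk_1)$ fixes $y$ and therefore sends $yz_1$ to $y\sigma(z_1)$. Hence two elements of $\{z' \in \P_\ell : \deg_1(z') = \deg_1(z)\}$ lie in the same $\Gal(\kk/\kk_1)$-orbit if and only if their images under \eqref{phichi} lie in the same $\Ga$-orbit, which yields the desired bijection $\Pp(z) \to \EuScript{K}(\xt)$.

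There is no real obstacle here: once the correct extension $\kk_1$ is identified as the fixed field of the stabiliser of $y$, the lemma is a clean consequence of the compatibility of the $\Ga$-action with the $\ell$-decomposition, encoded in \eqref{ponsonduterrail}. The only care needed is to translate the degree condition on $x' = yz'$, measured over $\kk_\E$, into the degree condition on $z'$ measured over $\kk_1$; the contribution of $y$ accounts for the index $[\kk_1 : \kk_\E]$ and disappears from the statement.
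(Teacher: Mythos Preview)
Your argument is correct and follows the same approach as the paper: the paper's proof is precisely the discussion preceding the lemma (combining the bijection \eqref{phichi} with the stabiliser computation \eqref{ponsonduterrail} to obtain \eqref{Bateman}), and you have simply made the passage to orbits more explicit than the paper does. The only addition is your second paragraph spelling out why $\Ga$-orbits on the $yz'$ side correspond to $\Gal(\kk/\kk_1)$-orbits on the $z'$ side, which the paper leaves implicit in \eqref{ponsonduterrail}.
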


Il ne nous reste plus qu'à calculer $\deg_{1}(z)$ en 
fonction des invariants associés à $\rt$.
Comme on a $a(\rt)=a(\st)$ et $\d(\rho)=\d(\s)$, et compte tenu de 
\eqref{DEGX} et \eqref{DEGY}, on en déduit que~:
\begin{equation}
\label{DEGZ}
\deg_{1}(z) = 
\frac{\deg(x)}{\deg(y)} = 
\d(\rho) a(\rt)
\end{equation}
que l'on note $\ww(\rt)=\ww(\st)$.
On obtient donc le résultat suivant. 

\begin{lemm}
\label{L3}
L'entier $\ww(\rt)t(\rt)$ est le nombre de $z'\in\P_\ell$ de degré $\ww(\rt)$ 
sur $\kk_{1}$.
\end{lemm}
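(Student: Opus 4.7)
The plan is to assemble the material established just above: the proposition identifying $t(\rt)$ with the cardinality of $\EuScript{K}(\xt)$, the preceding lemma giving a bijection $\Pp(z)\simeq\EuScript{K}(\xt)$ induced by \eqref{phichi}, and the computation \eqref{DEGZ} of $\deg_1(z)$.

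First I would chain these identifications, obtaining that $t(\rt)$ equals the cardinality of $\Pp(z)$. By definition, $\Pp(z)$ is the set of $\Gal(\kk/\kk_{1})$-orbits $[\![z']\!]$ of elements $z'\in\P_\ell$ satisfying $\deg_{1}(z')=\deg_{1}(z)$; and by \eqref{DEGZ}, this common degree equals $\ww(\rt)=\d(\rho)a(\rt)$.

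Next I would observe that each such orbit has cardinality exactly $\ww(\rt)$, which is precisely the condition $\deg_{1}(z')=\ww(\rt)$. Summing the cardinalities of these $t(\rt)$ pairwise disjoint orbits then yields $\ww(\rt)\cdot t(\rt)$ as the total number of elements $z'\in\P_\ell$ of degree $\ww(\rt)$ over $\kk_{1}$, which is precisely the claim.

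There is no real obstacle here: all the substantive work has been done in the preceding lemmas (the double bijection built from the Green and Dipper--James parametrisations, and the degree computation relating $\deg_{1}(z)$ to the invariants $a(\rt)$, $\d(\rho)$ and $s(\s)$). The present lemma simply repackages that information into the form needed for the counting arguments that follow.
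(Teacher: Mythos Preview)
Your proposal is correct and matches the paper's approach exactly: the paper does not give a separate proof of this lemma, simply introducing it with ``On obtient donc le r\'esultat suivant'', i.e.\ it is presented as an immediate consequence of the bijections $\Oo(\rt)\simeq\EuScript{K}(\xt)\simeq\Pp(z)$ together with the computation \eqref{DEGZ} of $\deg_1(z)=\ww(\rt)$. Your write-up spells out precisely this chain of reasoning.
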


Compte tenu de la relation $n(\rt)s(\rt)=f(\rt)$ (voir la section 
\ref{SECTION2}), l'extension de $\kk_{1}$ de degré $\ww(\rt)$ 
est de cardinal~: 
\begin{equation*}
q^{n(\rt)}.
\end{equation*}
On en déduit l'inégalité $t(\rt)\<\cc(\rt)$, et cette inégalité est une 
égalité si et seulement si $\ww(\rt)=1$, \ie si et seulement si $\rt$ est 
$\ell$-supercuspidale. 
Ceci met fin à la preuve du théorème \ref{CongCusp}.

\subsection{Preuve du théorème \ref{CalculW}}

Poussons maintenant plus loin les calculs dans le cas où 
$\r_\ell(\rt)$ n'est pas irréductible et supercuspidale, \ie que 
$\ww=\ww(\rt)>1$. 
Notons $\Q$ le cardinal de $\kk_{1}$ et, pour tout $n\>1$,
notons $f(n)=f_\Q(n)$ le nombre de $z'\in\P_\ell$ de degré $n$ 
sur $\kk_{1}$.
D'après le lemme \ref{L3}, on a donc~:
\begin{equation}
\label{memento}
t(\rt)=\frac{f(\ww)}{\ww}.
\end{equation}
Notons $v$ la valuation $\ell$-adique sur $\ZZ$, 
et notons $\kk'$ l'extension de $\kk_{1}$ de degré 
$w$ contenue dans $\kk$~; 
en partitionnant $\kk^{\prime\times}$ 
selon le degré de ses élément sur $\kk_{1}$, on obtient l'égalité~:
\begin{equation*}
\ell^{v(\Q^{\ww}-1)} = \sum\limits_{n |\ww} f(n).
\end{equation*}
Par inversion de Möbius, on a~:
\begin{equation*}
f(\ww) = \sum\limits_{n | \ww} 
\mu\left(\frac{\ww}{n}\right)
\ell^{v(\Q^{n}-1)}
\end{equation*}
où $\mu$ désigne la fonction de Möbius. 
Notons $w_0$ le plus grand diviseur de $\ww$ premier à $\ell$.

\begin{lemm}
L'ordre de $\Q$ dans $(\ZZ/\ell\ZZ)^\times$ est égal à $w_0$.
\end{lemm}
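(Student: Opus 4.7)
Le plan est de calculer directement l'ordre $e$ de $\Q$ dans $(\ZZ/\ell\ZZ)^\times$ à partir d'invariants déjà introduits, en combinant trois ingrédients. D'une part, $\e(\s)$ est par définition \eqref{QEKDY} l'ordre de $\Q^{\d(\s)}$ dans $(\ZZ/\ell\ZZ)^\times$. D'autre part, j'établirai l'identité $q_{\D'}^{m''}=\Q^{s(\s)}$ en remarquant que $q_{\D'}^{m''}$ est le cardinal de $\kk_{\D'}(y)$ et en calculant $[\kk_{\D'}(y):\kk_{1}]$ à l'aide de \eqref{DEGY} et de la relation $m'=\d(\s)m''$. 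Enfin, je mobiliserai la classification de Dipper-James des représentations cuspidales $\ell$-modulaires de $\GL_{m'}(\kk_{\D'})$, qui affirme que $\d(\s)$ est de la forme $e_{0}\ell^{u}$ avec $u\>0$, où $e_{0}$ désigne l'ordre de $q_{\D'}^{m''}$ dans $(\ZZ/\ell\ZZ)^\times$.

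J'enchaînerai ensuite les déductions comme suit. L'entier $e$ divise $\ell-1$, donc il est premier à $\ell$ et $(e,\d(\s))=(e,e_{0})$. Le deuxième ingrédient donnera $e_{0}=e/(e,s(\s))$, ce qui entraînera que $e_{0}$ divise $e$, puis $(e,e_{0})=e_{0}$. Le premier ingrédient donnera alors $\e(\s)=e/(e,\d(\s))=e/e_{0}$, soit $e=e_{0}\e(\s)$.

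Pour conclure, le lemme \ref{CN2}, applicable parce que $\ww>1$ entraîne $x\neq y$, livrera $a(\st)=\e(\s)\ell^{v}$ pour un $v\>0$. On aura donc $\ww=\d(\s)a(\st)=e_{0}\e(\s)\ell^{u+v}$, et comme $e_{0}$ et $\e(\s)$ divisent tous deux $\ell-1$, leur produit est premier à $\ell$, d'où $\ww_{0}=e_{0}\e(\s)=e$, ce qui est le résultat voulu.

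Le point le plus délicat sera le troisième ingrédient : l'application précise de la classification de Dipper-James aux représentations cuspidales $\ell$-modulaires de $\GL_{m'}(\kk_{\D'})$ exigera d'identifier correctement le corps de base ($\kk_{\D'}$) et la longueur ($m''$) du support supercuspidal de $\s$, afin de garantir que la partie première à $\ell$ de $\d(\s)$ coïncide exactement avec l'ordre de $q_{\D'}^{m''}$ modulo $\ell$. L'implication $\ww>1\Rightarrow x\neq y$ utilisée pour invoquer le lemme \ref{CN2} reposera sur un bref argument séparé : $x=y$ forcerait simultanément $\d(\s)=1$ et $a(\st)=1$, et donc $\ww=1$.
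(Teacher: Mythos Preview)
Your approach works (with one caveat below) but is far more elaborate than the paper's. The paper argues in three lines: since $w>1$, the element $z$ has order $\ell^r$ with $r\>1$; the identity $\deg_1(z)=w$ from \eqref{DEGZ} says precisely that the order of $\Q$ in $(\ZZ/\ell^r\ZZ)^\times$ is $w$; projecting onto $(\ZZ/\ell\ZZ)^\times$ (the kernel being an $\ell$-group) yields that the order of $\Q$ modulo $\ell$ is the prime-to-$\ell$ part $w_0$ of $w$. No representation theory enters. Your route instead decomposes $w=\d(\s)a(\st)$ and determines the prime-to-$\ell$ part of each factor separately via Dipper--James and Lemma~\ref{CN2}; this makes the link with the invariants $e_0$, $\e(\s)$, $s(\s)$ explicit (essentially recovering Remark~\ref{Soitditenpassant}), but it is heavy machinery for a statement that follows from an elementary cyclic-group computation.

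One genuine gap: your third ingredient, ``Dipper--James gives $\d(\s)=e_0\ell^u$'', is only valid when $\s$ is \emph{not} supercuspidal. Under the sole hypothesis $w>1$ one may perfectly well have $\d(\s)=1$ (with $a(\st)>1$), and then the equality $\d(\s)=e_0\ell^u$ would force $e_0=1$, which is false in general; your intermediate identities $e=e_0\e(\s)$ and $w_0=e_0\e(\s)$ both collapse in that case. The conclusion $w_0=e$ still holds there, since $\d(\s)=1$ gives $\e(\s)=e$ directly and Lemma~\ref{CN2} gives $w_0=\e(\s)$, but you must insert a case split to make the argument go through as written.
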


\begin{proof}
L'ordre de $z$ est de la forme $\ell^r$, $r\>0$.
Comme $\ww>1$, on déduit que $r\>1$.
La condition $\deg_{1}(z)=\ww$ signifie que l'ordre de $\Q$ 
dans $(\ZZ/\ell^{r}\ZZ)^\times$ est égal à $\ww$.
En projetant sur $(\ZZ/\ell\ZZ)^\times$, on en déduit que l'ordre de $\Q$ 
dans $(\ZZ/\ell\ZZ)^\times$ est égal au plus grand diviseur de $\ww$ 
premier à $\ell$, \ie $w_0$.
\end{proof}

On a donc~:
\begin{equation*}
f(\ww) = \sum\limits_{t\<v(\ww)} \sum\limits_{n | w_0} \ 
\mu(\ell^{v(\ww)-t})\mu\left(\frac{w_0}{n}\right)
\ell^{v(\Q^{n\ell^t}-1)}.
\end{equation*}
Si $v(\ww)=0$, on a $\ww=w_0>1$ et cela donne simplement~:
\begin{equation*}
f(w_0) = \sum\limits_{n | w_0} 
\mu\left(\frac{w_0}{n}\right)
\ell^{v(\Q^{n}-1)}.
\end{equation*}
On trouve que~:
\begin{equation*}
f(w_0) = \ell^{v(\Q^{w_0}-1)} + \sum\limits_{n | w_0 \atop{n\neq w_0}}
\mu\left(\frac{w_0}{n}\right)
=\ell^{v(\Q^{w_0}-1)}-1.
\end{equation*}
Supposons maintenant que $v(\ww)\>1$. 
Cela donne~:
\begin{equation*}
f(\ww) = \sum\limits_{n | w_0} \mu\left(\frac{w_0}{n}\right) \ell^{v(\Q^{n\ell^{v(\ww)}}-1)}
- \sum\limits_{n | w_0} 
\mu\left(\frac{w_0}{n}\right)\ell^{v(\Q^{n\ell^{v(\ww)-1}}-1)} \\ 
= f_{\Q^{\ell^{v(\ww)}}}(w_0)-f_{\Q^{\ell^{v(\ww)-1}}}(w_0).
\end{equation*}
Comme $\Q$ a le même ordre que $\Q^{\ell^k}$, $k\>0$,
dans $(\ZZ/\ell\ZZ)^\times$, à savoir $w_0$, 
on trouve que~:
\begin{equation*}
f(w) = \ell^{v(\Q^{u\ell^v}-1)}-\ell^{v(\Q^{u\ell^{v-1}}-1)}
= \ell^{v(\Q^{\ww}-1)-1}(\ell-1).
\end{equation*}
On trouve ainsi le résultat annoncé, en remarquant que $c(\rt)$ est égal à 
$\ell^{v(\Q^{\ww}-1)}$.

% \begin{lemm}
% Soit  $n\>1$.
% Alors $n-1$ et $n^\ell-1$ ont la même $\ell$-valuation 
% si et seulement si $v(n-1)=0$.
% Sinon, on a $v(n^\ell-1)=v(n-1)+1$.
% \end{lemm}

\subsection{Lien avec la formulation de Vignéras et Dat}

Dans ce paragraphe, nous allons reformuler le théorème \ref{CongCusp} sous une 
forme analogue à celles de Vignéras \cite[Proposition 2.3]{Vigl} et Dat 
\cite[Proposition 2.3.2]{Datj}.

Fixons une uniformisante $\w$ de $\F$ et,
pour toute représentation irréductible cuspidale $\ell$-adique entiè\-re $\rt$ 
de $\G$, 
notons $\Oo(\rt,\w)$ l'en\-semble des classes de représentations 
irréductibles cuspidales $\ell$-adiques entiè\-res de $\G$ qui sont 
congrues à $\rt$ et dont le caractère central prend la même valeur que celui 
de $\rt$ en $\w$.
Soit $\C(\rt)$ la plus grande puissance de $\ell$ divisant~:
\begin{equation*}
\frac{md}{n(\rt)}\cdot(q^{n(\rt)}-1).
\end{equation*}
On a le résultat suivant. 

\begin{prop}
\label{CongCusp2}
Soit $\rt$ une $\qlb$-représentation irréductible cuspidale et entière de $\G$. 
Alors l'en\-semble $\Oo(\rt,\w)$ est fini, de cardinal noté $\T(\rt)$,
et on a~:
\begin{equation*}
\T(\rt)\<\C(\rt)
\end{equation*}
avec égalité si et seulement si $\rt$ est $\ell$-supercuspidale.  
\end{prop}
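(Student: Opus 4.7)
La stratégie est de fibrer $\Oo(\rt,\w)$ au-dessus de $\Oo(\rt)$ via l'application $\rt''\mapsto[\G,\rt'']$, et d'établir que toutes les fibres ont même cardinalité $M:=\ell^{v(md/n(\rt))}$. On aura alors $\T(\rt)=t(\rt)\cdot M$ et, comme la définition de $\C(\rt)$ donne directement $\C(\rt)=M\cdot\cc(\rt)$, la proposition résultera du théorème \ref{CongCusp}.

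Soit $[\G,\rt']\in\Oo(\rt)$. Puisque $\r_\ell([\G,\rt'])=\r_\ell([\G,\rt])$, on fixe un représentant entier $\rt'_0$ de $[\G,\rt']$ tel que $\r_\ell(\rt'_0)=\r_\ell(\rt)$. Les représentations dans $[\G,\rt']$ s'écrivent alors $\rt'_0\chi_x$ pour $x\in\qlb^\times$, deux telles représentations étant isomorphes si et seulement si le quotient des $x$ correspondants est une racine $n(\rt)$-ième de l'unité~; on utilise ici l'égalité $n(\rt')=n(\rt)$, qui se déduit de $\deg(x')=\deg(x)$ (fin de la preuve du théorème \ref{CongCusp}) combinée au lemme \ref{lemsm} et à la relation $n\,s=f$. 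La torsion $\rt'_0\chi_x$ appartient à $\Oo(\rt,\w)$ si et seulement si~: (a) $\r_\ell(\chi_x)$ stabilise $\r_\ell(\rt)$ --- un tel stabilisateur étant de cardinal $a(\rt)n(\rho)$ dans le groupe des caractères non ramifiés $\flb$-adiques~; et (b) $x^{md}=c_0$, où $c_0:=\omega_\rt(\w)\omega_{\rt'_0}(\w)^{-1}$, en utilisant que la norme réduite envoie $\w\cdot I_m$ sur $\w^{md}$. La congruence $\r_\ell(\rt'_0)=\r_\ell(\rt)$ entraîne $\r_\ell(c_0)=1$.

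Le point-clé est l'identité arithmétique $(a(\rt)n(\rho))_{(\ell')}=n(\rt)_{(\ell')}$, l'indice $(\ell')$ désignant la partie première à $\ell$~; cette identité se déduit directement des relations $s(\rho)=a(\rt)s(\rt)$, $n(\rho)=(f(\rho)/s(\rho))_{(\ell')}$ et $n(\rt)s(\rt)=f(\rt)=f(\rho)$ rappelées au paragraphe \ref{PARA21}. Écrivant $x=x_0\zeta$ où $x_0$ est une racine $md$-ième quelconque de $c_0$ dans $\zlb^\times$ et $\zeta$ parcourt les racines $md$-ièmes de l'unité, les conditions (a) et (b) équivalent à ce que la partie $\ell$-primaire de $\zeta$ soit arbitraire (parmi $\ell^{v(md)}$ racines de l'unité) et que sa partie première à $\ell$ parcoure une classe latérale de cardinal $n(\rt)_{(\ell')}$ dans les racines $md$-ièmes premières à $\ell$. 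Cela fait $\ell^{v(md)}\cdot n(\rt)_{(\ell')}$ valeurs admissibles de $x$~; par quotient par les racines $n(\rt)$-ièmes, on obtient la cardinalité de la fibre égale à $\ell^{v(md)-v(n(\rt))}=M$, indépendante de $[\G,\rt']$.

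L'obstacle principal est la vérification rigoureuse de l'identité arithmétique ci-dessus, ainsi que le suivi soigneux des parties $\ell$- et $\ell'$-primaires lors du dénombrement. Une fois ceci établi, la proposition résulte directement de l'inégalité $t(\rt)\<\cc(\rt)$ du théorème \ref{CongCusp}, qui est une égalité si et seulement si $\rt$ est $\ell$-supercuspidale.
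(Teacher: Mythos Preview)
Your overall strategy---fibering $\Oo(\rt,\w)$ over $\Oo(\rt)$ and showing each fibre has constant cardinality $M=\ell^{v(md/n(\rt))}$, so that $\T(\rt)=t(\rt)\cdot M$ and $\C(\rt)=\cc(\rt)\cdot M$---is correct and is exactly the factorisation the paper obtains. The implementation, however, is genuinely different. The paper works on the side of \emph{extended} maximal simple types: it fixes $(\J,\lt)$ with normalizer $\widetilde{\boldsymbol{\J}}$, identifies $\Oo(\rt,\w)$ with a set of extensions $\tL'$ of $\lt$ to $\widetilde{\boldsymbol{\J}}$ subject to a congruence condition at a generator $\widetilde\w$ and a value condition at $\w$, and reads off $M$ from the index $(\widetilde{\boldsymbol{\J}}:\mult\F\J)=e(\E{:}\F)s(\rt)=md/n(\rt)$. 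You instead stay on the representation side and count unramified twists $\rt'_0\chi_x$, which avoids extended types but requires computing the stabilizer of $\r_\ell(\rt)$ under $\flb$-valued unramified characters. One imprecision to fix: that stabilizer has order $(a(\rt)n(\rho))_{(\ell')}=\e(\rho)n(\rho)=n(\rt)_{(\ell')}$, not $a(\rt)n(\rho)$, since $\flb^\times$ has no $\ell$-torsion; as you only use the $\ell'$-part in the count, this does not affect the conclusion. Your route is a bit more elementary and self-contained; the paper's route makes the link with the type-theoretic normalizer structure explicit.
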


\begin{proof}
D'après le théorème \ref{CongCusp},
il suffit de prouver que $\T(\rt)$ est le produit de 
$t(\rt)$ par la plus grande puissance de $\ell$ divisant
$mdn(\rt)^{-1}$.
Commençons par remplacer la cor\-respon\-dance bijective \eqref{sichuan} 
par la bijection~:
\begin{equation*}
\rho \leftrightarrow (\boldsymbol{\J},\Lambda)
\end{equation*}
entre classes d'isomorphisme de représentations irréductibles cuspidales 
de $\G$ et clas\-ses de conju\-gai\-son (sous $\G$) de types simples 
maximaux étendus de $\G$ (voir \cite[Théorème 3.11]{MSt}).

Soit $(\J,\lt)$ un type simple maximal contenu dans la 
$\qlb$-représentation irréductible cuspidale $\rt$, 
et soit $\widetilde{\boldsymbol{\J}}$ son normalisateur dans $\G$. 
D'après \cite[Proposition 3.1]{MSt}, il y a une unique représentation 
$\tL$ de $\widetilde{\boldsymbol{\J}}$ prolongeant $\lt$ dont l'induite à $\G$ est 
isomorphe à $\rt$.
Notons $\overline\Lambda$ la réduction modulo $\ell$ de $\tL$, 
qui est un prolongement de $\l$ à $\widetilde{\boldsymbol{\J}}$.

Soit $\rt'$ une représentation irréductible cuspidale $\ell$-adique entière de 
$\G$.  
Pour qu'elle soit con\-grue à $\rt$, il faut et il suffit qu'elle contienne un 
type simple maximal étendu $(\widetilde{\boldsymbol{\J}}{}',\tL')$ tel que 
$\widetilde{\boldsymbol{\J}}{}'$ soit égal à $\widetilde{\boldsymbol{\J}}$ 
et dont la réduction modulo 
$\ell$, notée $\overline\Lambda{}'$, soit égale à $\overline\Lambda$.
L'entier $\T(\rt)$ est donc le nombre de classes de $\G$-conjugaison 
de $(\widetilde{\boldsymbol{\J}}{}',\tL')$ tels que~:
\begin{equation*}
\widetilde{\boldsymbol{\J}}{}'=\widetilde{\boldsymbol{\J}},
\quad
\overline\Lambda{}'=\overline\Lambda
\quad\text{et}\quad
\tL'(\w)=\tL(\w).
\end{equation*}
% \begin{enumerate}
% \item 
% $\widetilde{\boldsymbol{\J}}{}'=\widetilde{\boldsymbol{\J}}$ et 
% $\overline\Lambda{}'=\overline\Lambda$~;
% \item
% $\tL'(\w)=\tL(\w)$.
% \end{enumerate}
Fixons une uniformisante $\w'$ de $\D'$ et posons~:
\begin{equation*}
\widetilde\w=(\w')^{d's(\rt)^{-1}}.
\end{equation*}
Le groupe 
$\widetilde{\boldsymbol{\J}}$ est engendré par $\J$ et $\widetilde\w$.
L'entier $\T(\rt)$ est égal au produit $t(\rt)t_1(\rt)$ 
où $t_1(\rt)$ est le nombre de re\-pré\-sen\-tations $\tL'$ de 
$\widetilde{\boldsymbol{\J}}$ 
prolongeant $\lt$ telles que~:
\begin{equation*}
\overline\Lambda{}'(\widetilde\w)=\overline\Lambda(\widetilde\w)
\quad\text{et}\quad
\tL'(\w)=\tL(\w).
\end{equation*}
% \begin{enumerate}
% \item
% $\overline\Lambda{}'(\widetilde\w)=\overline\Lambda(\widetilde\w)$~;
% \item
% $\tL'(\w)=\tL(\w)$.
% \end{enumerate}
% $\overline\Lambda'(\widetilde\w)=\overline\Lambda(\widetilde\w)$ et $\tL'(\w)=\tL(\w)$.
Le nombre de re\-pré\-sen\-tations irréductibles de $\widetilde{\boldsymbol{\J}}$ prolongeant 
$\l$ et prenant une 
valeur fixée en $\w$ est égal à l'indice de $\mult\F\boldsymbol{\J}$ dans $\widetilde{\boldsymbol{\J}}$, 
\ie à~:
\begin{equation}
\label{T1}
e(\E:\F)s(\rt)=\frac{md}{n(\rt)},
\end{equation}
où $e(\E:\F)$ désigne l'indice de ramification de $\E$ sur $\F$.
Compte tenu de la condition supplémen\-tai\-re sur 
$\overline\Lambda{}'(\widetilde\w)$, on trouve 
que $t_1(\rt)$ est la plus grande puissance de $\ell$ divisant \eqref{T1}.
\end{proof}

\section{Preuve du théorème \ref{lift}}

Soit $\rho$ une représentation irréductible cuspidale $\ell$-modulaire de 
$\G$ et soit $(\J,\k\otimes\s)$ un $\flb$-type simple maxi\-mal dans la classe de 
$\G$-conjugaison correspondant à $[\G,\rho]$.
D'après \cite{MSt}, pour que $\rho$ se relève à $\qlb$, il faut et 
suffit que $\s$, considérée comme une représentation irréductible cuspidale 
de $\GB$, se relève en une représentation 
irréductible cuspidale $\ell$-adique $\st$ telle que $s(\st)=s(\s)$.

Soit $y\in\Y$ correspondant à $\s$ par \eqref{james}.  
Pour qu'une telle représentation $\st$ existe, 
il faut et il suffit donc, d'après \eqref{DEGZ},
qu'il existe un $x\in\X$ dont la partie 
$\ell$-régulière soit $y$ et qui vérifie~:
\begin{equation*}
\deg(x) = \d(\s)\cdot\deg(y).
\end{equation*}

Si $\rho$ (donc $\s$) est supercuspidale, \ie si l'on a $\d(\s)=1$,
alors $x=y\in\X$ vérifie les conditions requises, et on retrouve bien le 
fait que toute représentation irréductible supercuspidale $\ell$-modulaire 
se relève. 

Supposons maintenant que $\rho$ est cuspidale mais pas 
supercuspidale, \ie que $\d(\s)>1$.
Notons~:
\begin{equation*}
\e(\rho) 
\end{equation*}
l'ordre de $q^{n(\rho)}$ dans $(\ZZ/\ell\ZZ)^\times$
(voir le paragraphe \ref{PARA21} pour la définition de l'entier $n(\rho)$). 
Soit $\nu$ le ca\-rac\-tère non ramifié $\ell$-modulaire
de $\G$ obtenu en com\-po\-sant la norme 
réduite de $\G$ sur $\mult\F$, la va\-lua\-tion de $\mult\F$ dans $\ZZ$ 
(envoyant une uniformisante sur $1$) et le morphisme 
envoyant $1$ sur l'in\-ver\-se de $q$ modulo $\ell$.

\begin{lemm}
\label{STEP0}
Soit un entier $i\in\ZZ$.
Pour que $\rho\nu^i=\rho$, il faut et il suffit que $\e(\rho)$ 
divise $i$.
\end{lemm}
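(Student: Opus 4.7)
The plan is to identify the group of unramified $\flb$-characters of $\G$ that fix $\rho$ with a cyclic subgroup of $\flb^\times$, then translate the relation $\rho\nu^i\simeq\rho$ into a congruence in $(\ZZ/\ell\ZZ)^\times$.

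First I would recall that an unramified $\flb$-character of $\G$ is determined by its value at a uniformizer of $\F$ (through the reduced norm), so the group of unramified $\flb$-characters of $\G$ is canonically isomorphic to $\flb^\times$. Under this identification, the stabilizer of $\rho$ is cyclic of order $n(\rho)$. Since, in the $\ell$-modular setting, $n(\rho)$ is the largest divisor of $f(\rho)s(\rho)^{-1}$ coprime to $\ell$ (as noted in paragraph \ref{PARA21}), the integer $n(\rho)$ is prime to $\ell$; thus $\flb^\times$ contains a unique cyclic subgroup of order $n(\rho)$, namely the group $\mu_{n(\rho)}$ of $n(\rho)$-th roots of unity, and the stabilizer of $\rho$ must coincide with $\mu_{n(\rho)}$.

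Next, by the definition of $\nu$ recalled above the statement, the character $\nu^i$ corresponds under the identification above to the element $q^{-i} \bmod \ell$ of $\flb^\times$. Therefore $\rho\nu^i\simeq\rho$ if and only if $(q^{-i})^{n(\rho)}=1$ in $\flb^\times$, that is if and only if $q^{i\,n(\rho)}\equiv 1\pmod\ell$. By the very definition of $\e(\rho)$ as the order of $q^{n(\rho)}$ in $(\ZZ/\ell\ZZ)^\times$, this last condition is equivalent to $\e(\rho)\mid i$, which is what we wanted.

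The only delicate point is the identification of the stabilizer with the full group $\mu_{n(\rho)}$ rather than just \emph{some} cyclic subgroup of $\flb^\times$ of order $n(\rho)$; this is where the fact that $n(\rho)$ is coprime to $\ell$ is essential. Everything else is a straightforward unwinding of definitions.
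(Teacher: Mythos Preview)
Your argument is correct and follows the same route as the paper's proof: both reduce the condition $\rho\nu^i\simeq\rho$ to $q^{n(\rho)i}\equiv 1\pmod\ell$ and conclude by the definition of $\e(\rho)$. The only cosmetic difference is that the paper cites \cite[\S3.4]{MSt} directly for the equivalence $\rho\nu^i\simeq\rho\Leftrightarrow\nu^{n(\rho)i}=1$, whereas you re-derive it by identifying the stabilizer of $\rho$ in the group of unramified characters with $\mu_{n(\rho)}$, using that $n(\rho)$ is prime to $\ell$.
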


\begin{proof}
D'après \cite[\S3.4]{MSt}, 
% on a $\rho\nu^i=\rho$
les représentations $\rho\nu^i$ et $\rho$ sont isomorphes 
si et seulement si $\nu^{n(\rho)i}=1$. 
L'ordre de $\nu$ étant égal à 
l'ordre de $q$ dans $(\ZZ/\ell\ZZ)^\times$, noté $e$, 
ceci équivaut à dire que $e$ divise $n(\rho)i$.
Il ne reste plus qu'à remarquer que~:
\begin{equation*}
\e(\rho) = \frac{e}{(e,n(\rho))}
\end{equation*}
pour conclure. 
\end{proof}

\begin{coro}
On a $\rho\nu\simeq\rho$ si et seulement si $\e(\rho)=1$.
\end{coro}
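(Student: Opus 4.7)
The corollary is a direct specialization of Lemma \ref{STEP0}, so the proof plan is essentially a one-line invocation. I would apply the lemma with the choice $i=1$: it asserts that $\rho\nu \simeq \rho$ if and only if $\e(\rho)$ divides $1$. Since $\e(\rho)$ is by definition a positive integer (the multiplicative order of $q^{n(\rho)}$ in $(\ZZ/\ell\ZZ)^\times$), the divisibility $\e(\rho) \mid 1$ is equivalent to the equality $\e(\rho)=1$.

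There is no real obstacle here: all the substantive content (the link between the twist $\rho\nu^i\simeq\rho$ and the order of $q$ modulo $\ell$, the relation $\e(\rho) = e/(e,n(\rho))$) has already been absorbed in the proof of Lemma \ref{STEP0}. The corollary simply records the case $i=1$ in a form that will be convenient for the application to Theorem \ref{lift}, where the hypothesis $\rho\nu\simeq\rho$ appears and needs to be translated into the numerical condition $\e(\rho)=1$ used in the counting arguments of the previous section.
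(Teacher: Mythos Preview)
Your proposal is correct and matches the paper's approach: the corollary is stated without proof immediately after Lemma~\ref{STEP0}, precisely because it is the specialization $i=1$ that you describe. There is nothing to add.
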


% \begin{proof}
% Soit $\chi$ un $\flb$-caractère non ramifié de $\G$.
% D'après \cite[\S3.4]{MSt}, les représentations $\rho\chi$ et $\rho$ sont
% isomorphes si et seulement si $\chi^{n(\rho)}=1$. 
% En appliquant ceci à $\chi=\nu$, 
% on trouve le résultat annoncé. 
% \end{proof}

Ainsi le théorème \ref{lift} peut être reformulé de la façon suivante. 

\begin{theo}
\label{lift2}
Soit $\rho$ une représentation irréductible cuspidale non 
supercuspidale $\ell$-modu\-lai\-re de $\G$.
Pour que $\rho$ se relève à $\qlb$, il faut et il suffit que 
les entiers $s(\rho)$ et $\d(\rho)$ soient premiers entre eux
et que $\e(\rho)=1$.
\end{theo}

D'après le paragraphe \ref{PARA21}, l'entier $n(\rho)$ est 
le plus grand diviseur de $f(\rho)s(\rho)^{-1}$ premier à $\ell$.
Par conséquent, $\e(\rho)$ est égal à l'entier $\e(\s)$ du paragraphe 
\ref{PARA22}. 

\begin{lemm}
\label{CN3}
Pour toute représentation irréductible cuspidale $\ell$-adique $\st$ relevant 
$\s$, le plus grand diviseur de $a(\st)$ premier à $\ell$ est $\e(\s)$.
\end{lemm}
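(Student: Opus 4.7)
Le plan est de d�duire ce lemme du lemme \ref{CN2}. � tout rel�vement $\ell$-adique $\st$ de $\s$ correspond, via \eqref{green}, un �l�ment $x \in \X$ dont la partie $\ell$-r�guli�re est, d'apr�s la th�orie de Dipper-James, l'�l�ment $y \in \Y$ associ� � $\s$ par \eqref{james}. Comme le lemme \ref{CN2} donne imm�diatement la conclusion voulue d�s lors que $x \neq y$, il ne reste qu'� �carter le cas $x = y$.

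C'est ici que j'utiliserais le contexte : dans la preuve du th�or�me \ref{lift2}, la repr�sentation $\s$ est suppos�e cuspidale non supercuspidale, de sorte que $\d(\s) > 1$. Si l'on avait $x = y$, alors $\deg(x) = \deg(y)$, et en combinant les relations \eqref{DEGX} et \eqref{DEGY} on obtiendrait $s(\st) = \d(\s) \cdot s(\s)$. Or la remarque \ref{gavroche} assure que $\d(\s)$ divise $m'$, donc $s(\st)$ ne serait pas premier � $m'$, contredisant le corollaire \ref{eponine} appliqu� � $\st$.

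Une fois cette contradiction obtenue, le lemme \ref{CN2} s'applique directement et fournit exactement l'assertion voulue : le plus grand diviseur de $a(\st)$ premier � $\ell$ vaut $\e(\s)$.

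Le seul point d�licat --- et principal obstacle --- est cette petite v�rification de divisibilit� permettant d'exclure l'�galit� $x = y$ ; il s'agit essentiellement de combiner proprement la remarque \ref{gavroche} et le corollaire \ref{eponine}. Tout le reste est imm�diat.
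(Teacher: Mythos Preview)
Your proof is correct and follows exactly the paper's route: reduce to Lemma~\ref{CN2} after excluding the case $x=y$. Your exclusion argument via the remarque~\ref{gavroche} and the corollaire~\ref{eponine} works, but is heavier than necessary --- the paper simply observes that $\s$ non supercuspidale means the degree of $y$ over $\kk_{\D'}$ is $m'\d(\s)^{-1}<m'$, so $y\notin\X$, whereas $x\in\X$ by construction.
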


\begin{proof}
Fixons un $x\in\X$ correspondant à $\st$ et de partie régulière $y$.
Comme $\rho$ (donc $\s$) n'est pas supercuspidale,
on a $x\neq y$.
Le résultat suit alors du lemme \ref{CN2}.
\end{proof}

Posons $f(\s)=f(\rho)$ (voir le paragraphe \ref{PARA21}).

\begin{lemm}
\label{LEM1}
Soit $z\in\P_\ell$ d'ordre $\ell^r$, $r\>0$.
On a $yz\in\X$ si et seulement si l'ordre de~:
\begin{equation}
\label{ORDREMYPROGRESSO}
q^{f(\s)\d(\s)^{-1}}
\end{equation}
dans $(\ZZ/\ell^r\ZZ)^\times$ est égal à $\d(\s)$.
\end{lemm}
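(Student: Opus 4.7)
\medskip

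The plan is to translate the condition $yz\in\X$ (which says $\deg_{\kk_{\D'}}(yz)=m'$) into a statement about the multiplicative order of a Frobenius power, and then verify that this power is exactly $q^{f(\s)\d(\s)^{-1}}$.

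First I would exploit the hypothesis that $y$ has order prime to $\ell$ while $z$ has order $\ell^r$. Since these orders are coprime, an automorphism $x\mapsto x^{q_{\D'}^k}$ fixes $yz$ if and only if it fixes $y$ and $z$ separately (as in \eqref{ponsonduterrail}). Hence
\begin{equation*}
\deg_{\kk_{\D'}}(yz)=\mathrm{lcm}\bigl(\deg_{\kk_{\D'}}(y),\deg_{\kk_{\D'}}(z)\bigr).
\end{equation*}
By the computation made in the proof of the lemma giving \eqref{DEGY}, the degree of $y$ over $\kk_{\D'}$ is $m''=m'\d(\s)^{-1}$, while the degree of $z$ over $\kk_{\D'}$ is, by definition of the order of $q_{\D'}$ modulo $\ell^r$, the integer $\alpha:=\mathrm{ord}_{\ell^r}(q_{\D'})$.

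Next I would note that $yz\in\X$ is equivalent to $\mathrm{lcm}(m'',\alpha)=m'=m''\d(\s)$, which, using $\mathrm{lcm}(m'',\alpha)=m''\alpha/(m'',\alpha)$, rewrites as
\begin{equation*}
\frac{\alpha}{(m'',\alpha)}=\d(\s).
\end{equation*}
But the left-hand side is precisely the order of $q_{\D'}^{m''}$ in $(\ZZ/\ell^r\ZZ)^\times$. So it remains to identify $q_{\D'}^{m''}$ with $q^{f(\s)\d(\s)^{-1}}$.

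Finally I would carry out this bookkeeping. Writing $g=[\E:\F]$ and using $d=d'(d,g)$, $m=m'g(d,g)^{-1}$, one gets $md=m'd'g$, so that
\begin{equation*}
f(\s)=\frac{md}{e(\E:\F)}=m'd'f(\E:\F),\qquad
\frac{f(\s)}{\d(\s)}=m''d'f(\E:\F).
\end{equation*}
Since $q_{\D'}=q^{d'f(\E:\F)}$, this yields $q^{f(\s)\d(\s)^{-1}}=q_{\D'}^{m''}$, completing the equivalence. The whole argument is essentially a careful divisibility computation; the only mildly delicate point is the coprimality argument that splits the Galois stabiliser of $yz$ into stabilisers of $y$ and $z$, but this is already recorded in \eqref{ponsonduterrail}.
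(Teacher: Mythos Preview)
Your argument is correct. The paper states this lemma without proof, so there is nothing to compare against; your computation is exactly the natural one and relies only on facts already established in the paper (the splitting \eqref{ponsonduterrail}, the identity $m'=\d(\s)m''$ from the proof of \eqref{DEGY}, and the arithmetic relations among $m,d,m',d',g$). The identification $q_{\D'}^{m''}=q^{f(\s)\d(\s)^{-1}}$ via $f(\s)=m'd'f(\E{:}\F)$ and $q_{\D'}=q^{d'f(\E{:}\F)}$ is the right bookkeeping, and the edge case $r=0$ also checks out (both sides reduce to $\d(\s)=1$).
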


Comme $y\in\Y$, il y a un $z\in\P_\ell$ (non trivial puisque $\s$ n'est pas 
supercuspidale) tel que $yz\in\X$.
Il y a donc un $r\>1$ tel que l'ordre de \eqref{ORDREMYPROGRESSO}
dans $(\ZZ/\ell^r\ZZ)^\times$ est égal à $\d(\s)$. 
En réduisant modulo $\ell$, on en déduit que son ordre dans 
$(\ZZ/\ell\ZZ)^\times$ est le plus grand diviseur de $\d(\s)$ 
premier à $\ell$.

\begin{lemm}
\label{LEM2}
Soit $z\in\P_\ell$ d'ordre $\ell^r$, $r\>0$.
On a $\deg(yz)=\d(\s)\cdot\deg(y)$ si et seulement si 
l'ordre de~:
\begin{equation}
\label{ORDREMYPROGRESSO2}
q^{f(\s)(\d(\s)s(\s))^{-1}}
\end{equation} 
dans $(\ZZ/\ell^r\ZZ)^\times$ est égal à $\d(\s)$.
\end{lemm}

Supposons d'abord que $\rho$ se relève à $\qlb$.
D'après les lemmes \ref{CN} et \ref{CN2}, on trouve que $\d(\rho)$ est premier à 
$s(\rho)$ et que $\e(\rho)=1$.

Inversement, supposons que les conditions du théorème \ref{lift} sont vérifiées. 
Soit $z\in\P_\ell$ d'ordre $\ell^r$, $r\>1$, tel que $yz\in\X$, et notons $n$ 
l'ordre de \eqref{ORDREMYPROGRESSO2} dans $(\ZZ/\ell^r\ZZ)^\times$.
D'après le lemme \ref{LEM1}, on a~:
\begin{equation}
\label{CRUX}
\frac{n}{(n,s(\s))} = \d(\s).
\end{equation}
L'hypothèse $\e(\rho)=1$ implique que~:
\begin{equation}
\label{CRUX2}
\frac{n}{(n,\d(\s))} = \ell^{t},
\quad t\>0.
\end{equation}
Si $\ell$ divise $\d(\s)$, alors $s(\s)$ est premier à $\ell$, et \eqref{CRUX} 
et \eqref{CRUX2} impliquent que $n=\d(\s)$.

En revan\-che, si $\d(\s)$ est premier à $\ell$, écrivons $n=\d(\s)n'$ avec 
$n'=(n,s(\s))=\ell^t$.
On peut remarquer que $t=v(n)$.
Alors l'élément~:
\begin{equation*}
z^{\ell^{v(n)}}\in\P_\ell
\end{equation*}
qui est d'ordre $\ell^{r-v(n)}$, 
vérifie la condition du lemme \ref{LEM2} car l'ordre de 
\eqref{ORDREMYPROGRESSO2} dans $(\ZZ/\ell^{r-v(n)}\ZZ)^\times$ est égal à 
$n\ell^{-v(n)}=\d(\s)$. 
Comme $\d(\rho)$ est premier à $s(\rho)$, il vérifie aussi la condition du 
lemme \ref{LEM1}. 
Ceci met fin à la preuve du théorème \ref{lift}.

\begin{rema}
\label{Soitditenpassant}
Posons $k=k(\s)$ et $s=s(\s)$,
et notons $\tau$ l'unique représentation irré\-ducti\-ble
super\-cuspidale de $\GL_{m'k^{-1}}(\kk_{\D'})$ telle que $\s$ soit un 
sous-quotient de l'induite parabolique de 
$\tau\otimes\dots\otimes\tau$. %($k$ fois).
Le plus grand diviseur de $k$ premier à $\ell$ est
$\e(\tau)(s,\e(\tau))^{-1}$ et $\e(\s)$ est égal à $(s,\e(\tau))$.
La condition du théorème \ref{lift} s'écrit donc $\e(\rho)=1$ et 
min$(v(k),v(s))=0$.
\end{rema}

\section{Preuve du théorème \ref{lifta}}
\label{SECTION5}

Soit $\rho$ une représentation irréductible cuspidale $\ell$-modulaire de 
$\G$, et soit $a>1$.
On cherche à quelle condition $\rho$ admet un $a$-relèvement, \ie une 
représentation irréductible cuspidale $\ell$-adique entière $\rt$ de $\G$ dont la 
réduc\-tion modulo $\ell$ contienne $\rho$ et soit de longueur $a$. 
Il faut et suffit pour cela que $\s$ se relève en une représentation 
irréductible cuspidale $\ell$-adique $\st$ telle que $s(\s)=a\cdot s(\st)$.
D'après les lemmes \ref{CN} et \ref{CN2}, on a des conditions nécessaires~: 
\begin{enumerate}
\item
$a=\e(\s)\ell^u$ avec $u\>0$.
\item
$a$ divise $s(\s)$ et $s(\s)a^{-1}$ est premier à $\d(\s)$.
% \item $a$ est premier à $m'/\d(\s)$.
\end{enumerate}
% Mais (3) découle de (2) car $s(\s)$ est premier à $m'/\d(\s)$.
Supposons donc qu'elles sont vérifiées.

\begin{rema}
En particulier, si $\s$ n'est pas supercuspidale,
les lemmes \ref{CN} et \ref{CN2} montrent que 
$\e(\s)$ divise $s(\s)$.
\end{rema}

Soit $y\in\Y$ correspondant à $\s$ par \eqref{james}.  
Pour qu'une telle représentation $\st$ existe, 
il faut et il suffit donc, d'après \eqref{DEGZ},
qu'il existe un $x\in\X$ dont la partie 
$\ell$-régulière soit $y$ et qui vérifie~:
\begin{equation*}
\deg(x) = a\d(\s)\cdot\deg(y).
\end{equation*}
Comme $y\in\Y$, il y a un $z\in\P_\ell$ (non trivial car $a>1$) 
tel que $yz\in\X$.
Il y a donc un entier $r\>1$ tel que l'ordre de \eqref{ORDREMYPROGRESSO}
dans $(\ZZ/\ell^r\ZZ)^\times$ est égal à $\d(\s)$. 
En réduisant modulo $\ell$, on en déduit que son ordre dans 
$(\ZZ/\ell\ZZ)^\times$ est le plus grand diviseur de $\d(\s)$ 
premier à $\ell$.

\begin{lemm}
\label{LEM2a}
Soit $z\in\P_\ell$ d'ordre $\ell^r$, $r\>0$.
On a $\deg(yz)=a\d(\s)\cdot\deg(y)$ si et seulement si 
l'ordre de~:
\begin{equation}
\label{ORDREMYPROGRESSO2a}
% \Q = 
q^{f(\s)(\d(\s)s(\s))^{-1}}
\end{equation} 
dans $(\ZZ/\ell^r\ZZ)^\times$ est égal à $a\d(\s)$.
\end{lemm}

Soit $z\in\P_\ell$ d'ordre $\ell^r$, $r\>1$, tel que $yz\in\X$, et soit $n$ 
l'ordre de \eqref{ORDREMYPROGRESSO2a} dans $(\ZZ/\ell^r\ZZ)^\times$.
D'après le lemme \ref{LEM1}, on a~:
\begin{equation}
\label{CRUXa}
\frac{n}{(n,s(\s))} = \d(\s).
\end{equation}
En particulier, on a~:
\begin{equation}
\label{CRUXabis}
v(n)=v(k)+{\rm min}(v(n),v(s(\s))).
\end{equation}
Si l'on note $n_0$ le plus grand diviseur de $n$ premier à $\ell$, on a~:
\begin{equation}
\label{CRUX2a} 
\frac{n_0}{(n_0,\d(\s))} = \e(\s).
\end{equation}
On déduit de l'hypothèse sur $a$ que $(n_0,s(\s))=\e(\s)$, 
puis que $n_0=\d_0(\s)\e(\s)$, 
où $\d_0(\s)$ désigne le plus grand diviseur de 
$\d(\s)$ premier à $\ell$. 

On cherche un $t\in\{1,\dots,v(q^{f(\s)}-1)\}$ 
tel que l'ordre de \eqref{ORDREMYPROGRESSO2a} dans $(\ZZ/\ell^t\ZZ)^\times$ 
soit égal à $a\d(\s)$.
D'après l'hypo\-thèse sur $a$, cela impliquera automatiquement que 
l'ordre de \eqref{ORDREMYPROGRESSO} est $\d(\s)$. 
Soit~:
\begin{equation*}
t_0 = v(\Q^{n_0}-1).
% = v(q^{f(\s)\e(\s)s(\s)^{-1}}-1).
\end{equation*}
On a donc~:
\begin{equation*}
v(q^{f(\s)}-1) 
% = t_0 + v(s(\s)/\e(\s)) 
= t_0 + v(s(\s)) + v(k).
\end{equation*}
Posons $t=t_0+u+v(k)$ (on a bien $t\<v(q^{f(\s)}-1)$ car $u\<v(s(\s))$).
Alors l'ordre de \eqref{ORDREMYPROGRESSO2a} 
dans $(\ZZ/\ell^t\ZZ)^\times$ est égal à $n_0\ell^u=a\d(\s)$. 

\begin{rema}
Compte tenu de la remarque \ref{Soitditenpassant}, la condition du théorème \ref{lifta} se 
résume à $u\in\{0,\dots,v(s(\rho))\}$ et min$(v(k(\rho)),v(s(\rho))-u)=0$.
\end{rema}

\section{Preuve du théorème \ref{comptagedeclasses}}

Dans toute cette section, on fixe des entiers $n,w\>1$ tels que $w$ divise 
$n$.

\subsection{}
\label{ENDO}

Dans ce paragraphe, nous rappelons brièvement quelques 
attributs des $\F$-endoclasses de carac\-tè\-res simples
dont nous aurons besoin.
Pour plus de détails, nous renvoyons le lecteur à \cite{BSS}. 

Soit $\A$ une $\F$-algèbre centrale simple de dimension finie,
soit $[\La,n_\La,0,\b]$ une strate simple de $\A$ et soit 
$\t\in\Cc(\La,0,\b)$ un caractère simple.
Le couple $([\La,n_\La,0,\b],\t)$ 
défi\-nit un ps-caractère dont l'endo-classe 
-- qui est une classe d'équivalence de ps-ca\-rac\-tères --
est notée $\TT$.
Les entiers~:
\begin{eqnarray*}
f(\TT) &=& f(\F[\b]:\F), \\
\deg(\TT) &=& [\F[\b]:\F],
\end{eqnarray*} 
\ie le degré résiduel et le degré 
de $\F[\b]$ sur $\F$ respectivement, 
ne dépendent pas du choix de $\b$ mais uniquement de $\TT$.
Le nombre rationnel positif~:
\begin{equation*}
l(\TT) = -v_{\F}(\b),
\end{equation*}
où $v_{\F}$ désigne la valuation sur $\F[\b]$ normalisée 
en donnant la valeur $1$ à une uniformisante de $\F$, ne dépend pas non plus 
du choix de $\b$ mais uniquement de $\TT$.

Si $\rho$ est une $\R$-représentation irréductible cuspidale de $\mult\A$ 
(ici $\R$ désigne $\qlb$ ou $\flb$), il existe un couple 
$([\La,n_\La,0,\b],\t)$ comme ci-dessus tel que la 
restriction de $\rho$ au pro-$p$-sous-groupe $\H^1(\b,\La)$
contienne $\t$.
L'endoclasse $\TT$ définie par ce couple ne dépend que de la classe d'isomorphisme 
de la représentation $\rho$,
et le nombre rationnel $l(\TT)\>0$ est le niveau normalisé 
(ou aussi profon\-deur) de $\rho$.
% au sens de \cite{ABPS}.

\subsection{}

Soit $\D$ une $\F$-algèbre centrale simple de degré réduit $d$ divisant $n$,
et soit $\TT$ une $\F$-endoclasse de degré $\gg$ divisant $nw^{-1}$.
On définit un entier $m\>1$ par la relation $md=n$ et on pose~:
\begin{equation*}
d' = \frac{d}{(d,\gg)},
\quad
m' = \frac{m(d,\gg)}{\gg}.
\end{equation*}
Pour tout $u\>1$ divisant $m$, 
notons $\Aa(\D,\TT,w,u)$ l'en\-sem\-ble des classes inertielles de 
re\-pré\-sen\-ta\-tions 
irréductibles cuspidales $\ell$-adiques $\rt$ de $\GL_{u}(\D)$ telles que~: 
\begin{enumerate}
\item
$w(\rt)=w$~;
\item
% $\TT(\rt)=\TT$. 
l'endoclasse de $\rt$ est égale à $\TT$.
\end{enumerate}
Remarquons que, pour qu'il y ait une représentation 
irréductible cuspidale $\ell$-adique $\rt$ de $\GL_{u}(\D)$ 
d'endoclasse $\TT$, il faut et il suffit que l'entier $u$ soit 
de la forme~:
\begin{equation} 
\label{quark}
u = \frac{\gg}{(d,\gg)}\cdot u'
\end{equation}
où $u'$ est un diviseur de $m'$.
Posons maintenant~:
\begin{equation}
\label{Pietranera}
\Aa(\D,\TT,w) = 
\bigcup\limits_{u|m} \Aa(\D,\TT,w,u)
\end{equation}
et notons $\Aa_{\ell}(\D,\TT,w)$ l'ensemble des réductions mod $\ell$ 
des éléments de \eqref{Pietranera}. 
% l'image de \eqref{Pietranera} par $\r_\ell$.
L'endoclasse $\TT$ étant fixée, cet ensemble est 
fini, et son cardinal sera noté $\boldsymbol{a}_{\ell}(\D,\TT,w)$.

\subsection{}

Dans ce paragraphe, on suppose que $\TT$ est la $\F$-endoclasse nulle $\0$, 
et on va calculer $\boldsymbol{a}_{\ell}(\D,\0,w)$.
Étant donné un entier $u\>1$ divisant $m$,
toute représentation irréductible cuspidale $\ell$-mo\-dulaire $\s$ de $\GL_u(\kk_\D)$ 
définit un type simple maximal de niveau $0$ de $\GL_u(\D)$ |
donc une classe inertielle $\Omega(\s)$ de représentations 
cuspidales de niveau $0$ de $\GL_u(\D)$.
L'application~:
\begin{equation}
\label{Gina}
\s \mapsto \Omega(\s)
\end{equation}
est surjective (toutes les classes inertielles de représentations 
cuspidales de niveau $0$ de $\GL_u(\kk_\D)$ sont atteintes) et ses 
fibres sont les classes de conjugaison sous le groupe de Galois de 
$\kk_{\D}$ sur $\kk_\F$.

\begin{lemm}
\label{riddle}
Soit $\s$ une représentation irréductible cuspidale $\ell$-mo\-dulaire de 
$\GL_u(\kk_\D)$. 
Alors $\Omega(\s)$ appartient à $\Aa_\ell(\D,\0,w)$ si et seulement 
s'il existe une représentation irréductible cuspidale $\ell$-adique 
$\st$ de $\GL_u(\kk_\D)$ dont la réduction modulo $\ell$ soit $\s$ et 
telle que $w(\st)=w$.
\end{lemm}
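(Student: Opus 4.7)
The plan is to unwind the correspondence \eqref{sichuan} in the depth-zero case. When the endoclass is $\0$ the underlying simple stratum is trivial, so $\E=\F$, $\D'=\D$, $m'=u$, and the maximal simple type may be chosen of the form $(\GL_u(\o_\D),\l)$ with $\l$ the inflation to $\GL_u(\o_\D)$ of a cuspidal representation of $\GB=\GL_u(\kk_\D)$. Applying \eqref{sichuan} to each of $\R=\qlb$ and $\R=\flb$, one obtains a bijection between inertial classes of depth-zero cuspidal $\R$-representations of $\GL_u(\D)$ and $\Gal(\kk_\D/\kk_\F)$-orbits of cuspidal $\R$-representations of $\GL_u(\kk_\D)$, compatible with reduction mod $\ell$; on the $\flb$-side this is precisely the surjection \eqref{Gina}. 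Under these bijections the invariants match: one has $\d(\rho)=\d(\s)$ by the lemma of \S\ref{PARA21}, and $a(\rt)=a(\st)$ in the depth-zero case (both are equal to $s(\s)/s(\st)$), so that $\ww(\rt)=\ww(\st)$.

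With these identifications in place the forward direction is immediate. If $\Omega(\s)\in\Aa_\ell(\D,\0,w)$, there is a depth-zero $\ell$-adic cuspidal $\rt$ of some $\GL_{u'}(\D)$ with $\ww(\rt)=w$ whose inertial class reduces mod $\ell$ to $\Omega(\s)$; comparison of ambient groups forces $u'=u$, and the type-theoretic data of $\rt$ produces a cuspidal $\ell$-adic representation $\st_0$ of $\GL_u(\kk_\D)$. The equality of $\Omega(\s)$ with the image of the reduction of $\st_0$ means that $\s$ and this reduction are $\Gal(\kk_\D/\kk_\F)$-conjugate, so after replacing $\st_0$ by a suitable Galois conjugate one obtains a cuspidal $\ell$-adic representation $\st$ of $\GL_u(\kk_\D)$ lifting $\s$ and satisfying $\ww(\st)=\ww(\st_0)=\ww(\rt)=w$.

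Conversely, given a lift $\st$ of $\s$ with $\ww(\st)=w$, one constructs $\rt$ by inflating $\st$ to $\GL_u(\o_\D)$, extending this representation to the normalizer of $\GL_u(\o_\D)$ in $\GL_u(\D)$, and compactly inducing. The resulting $\rt$ is an integral cuspidal $\ell$-adic representation of $\GL_u(\D)$ of depth zero, its mod-$\ell$ inertial class reduction is $\Omega(\s)$ by functoriality of the construction, and the invariant matching above yields $\ww(\rt)=\ww(\st)=w$.

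The only substantive point in the whole argument is the identification $\ww(\rt)=\ww(\st)$ in the depth-zero case, which is already built into the framework recalled in \S\ref{PARA22}; everything else is an unwinding of the definitions.
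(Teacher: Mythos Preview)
Your argument is correct. In the paper itself this lemma is stated without proof and is treated as an immediate consequence of the type-theoretic dictionary recalled in \S\ref{PARA21}--\S\ref{PARA22}: in the depth-zero case the $\b$-extension is trivial, the map \eqref{sichuan} reduces to \eqref{Gina} on each side, and the compatibility with reduction mod $\ell$ together with the equalities $\d(\rho)=\d(\s)$ and $a(\rt)=a(\st)$ (hence $\ww(\rt)=\ww(\st)$) established in \S\ref{PARA21} and in the proof of Theorem~\ref{CongCusp} make the statement tautological. Your write-up spells this out carefully, including the Galois-conjugation step needed to pass from ``$\Omega(\s)$ equals the reduction'' to ``$\s$ equals the reduction'', which is exactly the point one has to check; there is nothing to add.
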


Notant $\BB_{\ell}(q,m,d,w)$ l'image réciproque de $\Aa_\ell(\D,\0,w)$ par 
\eqref{Gina}, qui est décrite par le lemme \ref{riddle}, on obtient ainsi~:
\begin{equation*}
\boldsymbol{a}_{\ell}(\D,\0,w) = \sum_{\s} \frac{s(\s)}{d}
\end{equation*}
où $\s$ décrit l'ensemble $\BB_{\ell}(q,m,d,w)$.

Fixons une clôture algébrique $\overline{\kk}$ de $\kk_{\D}$ et notons 
$\kk$ l'extension de $\kk_\F$ de degré $nw^{-1}$ incluse dans 
$\overline{\kk}$. 
(Attention~! Cette extension $\kk$ de $\kk_\F$ 
ne coïncide avec l'extension ainsi notée 
dans la section \ref{SECTION2} que si $w=1$.)
Pour tout $y\in\kk^{\times}$, posons~:
\begin{eqnarray*}
\deg(y) &=& \text{degré de $y$ sur $\kk_\F$}, \\
\ee(y) &=& \text{ordre de $q^{\deg(y)}$ dans $(\ZZ/\ell\ZZ)^\times$}.
\end{eqnarray*}
et notons $\YY_{\ell}(q,n,w)$ l'ensemble des $y\in\kk^{\times}$, 
d'ordre premier à $\ell$, 
tels que $\ee(y)$ soit égal à $w_0$, 
le plus grand diviseur de $w$ premier à $\ell$.
Nous allons définir une application surjective~:
\begin{equation*}
\YY_{\ell}(q,n,w) \to \BB_{\ell}(q,m,d,w)
\end{equation*}
dépendant du choix de $\D$. 

Soit $y\in\YY_{\ell}(q,n,w)$. 
Notons~:
\begin{equation*}
r(y) = \frac{\deg(y)}{(\deg(y),d)}
\end{equation*} 
le degré de $y$ sur $\kk_{\D}$ et $\boldsymbol{\uptau}_\D(y)$ la re\-pré\-senta\-tion 
irréductible supercus\-pidale $\ell$-modulaire du groupe $\GL_{r(y)}(\kk_\D)$ 
correspon\-dant à $y$ par \eqref{james}. 
Posons $s(y)=s(\boldsymbol{\uptau}_\D(y))$ et~:
\begin{equation*}
\d(y)=\frac{w}{(w,s(y))}.
\end{equation*}
On a le lemme suivant. 

\begin{lemm}
Il existe une unique représentation irréductible cuspidale~:
\begin{equation*}
\boldsymbol{\upsigma}_\D(y) 
\end{equation*} 
dont le support supercuspidal soit égal à $k(y)\cdot\boldsymbol{\uptau}_\D(y)$.
\end{lemm}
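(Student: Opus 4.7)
The plan is to apply the classification of irreducible cuspidal $\ell$-modular representations of $\GL_N(\kk_\D)$ by their supercuspidal support (the finite-group analogue of [MSc, Théorème 6.14]). That classification asserts that the irreducible cuspidal representations whose supercuspidal support is a multiple of a given supercuspidal $\tau$ are in bijection with the integers $k\>1$ of the form $\e(\tau)\cdot\ell^u$ ($u\>0$), the bijection sending such a $k$ to the unique cuspidal representation with supercuspidal support $k\cdot\tau$. Granting this, uniqueness of $\boldsymbol{\upsigma}_\D(y)$ is automatic and the whole statement reduces to verifying that $\d(y)=w/(w,s(y))$ is a permissible multiplicity for $\tau=\boldsymbol{\uptau}_\D(y)$.

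To do this I would first compute $\e(\boldsymbol{\uptau}_\D(y))$ explicitly. Applying Lemma~\ref{lemsm} to the finite group $\GL_{r(y)}(\kk_\D)$, one has $\deg(y)=r(y)\cdot d/s(y)$, so $s(y)=d/(\deg(y),d)$, and the analogue of the relation between $n,s,f$ from paragraph~\ref{PARA21} identifies $n(\boldsymbol{\uptau}_\D(y))$ with the largest divisor of $\deg(y)$ prime to $\ell$. Since $y\in\YY_\ell(q,n,w)$ the order of $q^{\deg(y)}$ in $\mult{(\ZZ/\ell\ZZ)}$ equals $w_0$, the largest divisor of $w$ prime to $\ell$, from which one reads off that $\e(\boldsymbol{\uptau}_\D(y))$ is precisely the prime-to-$\ell$ part of $\d(y)$.

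Next I would show that $\d(y)/\e(\boldsymbol{\uptau}_\D(y))$ is a power of $\ell$. Writing $w=w_0\ell^{v(w)}$ and splitting $(w,s(y))$ into its $\ell$-part and prime-to-$\ell$ part, this is a direct valuation computation from the definition $\d(y)=w/(w,s(y))$: the prime-to-$\ell$ part of $\d(y)$ is exactly $w_0/(w_0,s(y)_0)$, which matches $\e(\boldsymbol{\uptau}_\D(y))$ by the previous step, while the ratio is $\ell^{v(w)-\min(v(w),v(s(y)))}$. Hence $\d(y)$ has the required form and the existence part of the classification yields $\boldsymbol{\upsigma}_\D(y)$.

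The main obstacle I anticipate is matching invariants across two different parameterisations: the $\kk_\F$-based quantities $\ee(y),w_0,s(y)$ used to define $\YY_\ell(q,n,w)$ here, versus the $\kk_\D$-based invariants $n(\tau),\e(\tau),s(\tau)$ governing the classification of cuspidal representations of $\GL_{r(y)}(\kk_\D)$; the bookkeeping via $(\deg(y),d)$ and the Galois-orbit structure of $y$ over the two residue fields has to be done carefully, and it is essentially this bookkeeping that drives the whole argument, everything else being formal consequences of the classification.
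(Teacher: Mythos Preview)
Your strategy is the paper's strategy: invoke the Dipper--James/Vign\'eras classification (the paper cites \cite[III.2.5]{Vigb}) and check that $\d(y)$ is an admissible multiplicity for~$\boldsymbol{\uptau}_\D(y)$. The difference is in the bookkeeping, and here the paper is noticeably cleaner than your sketch.

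The criterion from \cite{Vigb} is stated directly over~$\kk_\D$: a cuspidal with supercuspidal support $k\cdot\tau$ exists and is unique if and only if the prime-to-$\ell$ part $k_0$ of $k$ equals the order of $q_\D^{r(y)}=q^{dr(y)}$ in $(\ZZ/\ell\ZZ)^\times$. The paper then uses only the two identities $r(y)d=s(y)\deg(y)$ and $\ee(y)=w_0$ to compute this order as $w_0/(w_0,s(y))$, which is visibly $k_0$. That is the entire proof.

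Your route through a $p$-adic-style invariant $n(\boldsymbol{\uptau}_\D(y))$ is the detour that creates the very $\kk_\F$/$\kk_\D$ matching problem you flag at the end. As written there is a small slip: identifying $n(\tau)$ with the prime-to-$\ell$ part of $\deg(y)$ and then looking at the order of $q^{\deg(y)}$ yields $w_0$, not $k_0$; the missing factor is precisely the $s(y)$ coming from $dr(y)=s(y)\deg(y)$, i.e.\ from working over $\kk_\D$ rather than $\kk_\F$. Once you state the criterion with $q^{dr(y)}$ instead of $q^{\deg(y)}$ the computation is one line and your third paragraph becomes redundant (if $k_0$ already equals the required order, $k/k_0$ is a power of $\ell$ by definition).
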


\begin{proof}
D'après \cite[III.2.5]{Vigb},
% citer Vigb, finimodl, MSc ?
il suffit de prouver que 
le plus grand diviseur de $k$ pre\-mier à $\ell$, noté $k_0$, 
est égal à l'ordre de $q^{dr(y)}$ dans $(\ZZ/\ell\ZZ)^\times$.
Comme on a $\ee(y)=w_0$ d'une part et $r(y)d = s(y)\deg(y)$
d'autre part, cet ordre est égal à~:
\begin{equation*}
\frac{w_0}{(w_0,s(y))} = k_0,
\end{equation*}
ce qui met fin à la démonstration. 
\end{proof}

\begin{lemm}
Pour tout $y\in\YY_{\ell}(q,n,w)$,
la représentation $\boldsymbol{\upsigma}_\D(y)$ appartient à $\BB_{\ell}(q,m,d,w)$.
\end{lemm}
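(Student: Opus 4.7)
Le plan consiste � appliquer le th�or�me \ref{lifta} � la $\flb$-repr�sentation irr�ductible cuspidale $\rho=\boldsymbol{\upsigma}_\D(y)$ de $\GL_u(\D)$ avec $u=\d(y)r(y)$, pour la longueur cible $a=(w,s(y))$. Le rel�vement $\ell$-adique $\st$ ainsi obtenu satisfera $w(\st)=\d(\rho)a(\st)=\d(y)(w,s(y))=w$, ce qui permettra de conclure par le lemme \ref{riddle}. Dans le cas limite $a=1$, on substituera le th�or�me \ref{lift} ou bien, si $\rho$ est supercuspidale, le fait bien connu que les supercuspidales se rel�vent automatiquement.

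Deux v�rifications pr�paratoires s'imposent. En posant $g=(\deg(y),d)$, $r(y)=\deg(y)/g$ et $s(y)=d/g$, la relation $\deg(y)\mid nw^{-1}$ jointe � la coprimalit� $(r(y),s(y))=1$ entra�nera successivement $r(y)\mid m$, puis $\d(y)\mid m/r(y)$, et donc $u\mid m$. Ensuite, j'identifierai les invariants de $\rho$ intervenant dans le th�or�me \ref{lifta}~: on aura $\d(\rho)=\d(y)$ par construction, $s(\rho)=s(y)$ par la description de niveau z�ro via $\boldsymbol{\uptau}_\D(y)$, et $\e(\rho)=(s(y),w_0)$. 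Ce dernier point utilise que $q^{\deg(y)}\equiv q^{n(\boldsymbol{\uptau}_\D(y))}\pmod{\ell}$ (leur quotient �tant une puissance de $\ell$, neutre dans $(\ZZ/\ell\ZZ)^\times$ puisque ce groupe est d'ordre $\ell-1$), ce qui donne $\e(\boldsymbol{\uptau}_\D(y))=\ee(y)=w_0$ par hypoth�se~; la remarque \ref{Soitditenpassant} fournit ensuite la formule recherch�e.

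Les conditions du th�or�me \ref{lifta} se v�rifieront alors directement. En �crivant $s(y)=s_0\ell^{v(s(y))}$ et $w=w_0\ell^{v(w)}$ avec $s_0,w_0$ premiers � $\ell$, on obtient $a=(w_0,s_0)\ell^{\min(v(w),v(s(y)))}=\e(\rho)\ell^{u_0}$ avec $u_0\<v(s(\rho))$, d'o� la premi�re condition~; la seconde, � savoir $(s(\rho)/a,\d(\rho))=1$, r�sulte imm�diatement de ce que les entiers $s(y)/(w,s(y))$ et $w/(w,s(y))$ sont premiers entre eux, tout facteur premier commun fournissant un multiple strict de $(w,s(y))$ divisant � la fois $w$ et $s(y)$. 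Le principal obstacle r�sidera dans l'identification correcte des invariants de $\rho$ en fonction des donn�es associ�es � $y$, notamment la formule $\e(\rho)=(s(y),w_0)$, qui requiert de combiner la th�orie de Green-Dipper-James, le lien entre les invariants du groupe r�siduel fini et ceux de son rel�vement $p$-adique de niveau z�ro via la construction des types simples maximaux, et la remarque \ref{Soitditenpassant}.
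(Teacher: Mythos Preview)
Your approach is correct and coincides with the paper's: both verify that $\d(y)r(y)$ divides $m$ and then apply the lifting criterion of Section~\ref{SECTION5} (equivalently, Theorem~\ref{lifta}) with target length $a=(w,s(y))$, so that the resulting lift satisfies $w=\d(y)\cdot a$. Two small remarks: first, $\boldsymbol{\upsigma}_\D(y)$ is a representation of the \emph{finite} group $\GL_u(\kk_\D)$, not of $\GL_u(\D)$, so the paper invokes the finite-group criterion from the beginning of Section~\ref{SECTION5} directly rather than detouring through a $p$-adic representative $\rho\in\Omega(\boldsymbol{\upsigma}_\D(y))$ and then back via Lemma~\ref{riddle}; second, your verification is in fact more careful than the paper's, which checks only condition~(2) explicitly and does not separate out the boundary case $a=1$.
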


\begin{proof}
Il faut d'abord prouver que le degré de $\boldsymbol{\upsigma}_\D(y)$ divise 
$m$. 
D'abord, on a~:
\begin{equation*}
s(y)=\frac{d}{(\deg(y),d)}.
\end{equation*}
Par hypothèse sur $y$, il existe un entier $t\>1$ tel que $n=wt\cdot\deg(y)$.
On en déduit que~:
\begin{equation*}
r(y)=\frac{m}{(m,wt)}
\quad\text{et}\quad
s(y)=\frac{wt}{(m,wt)}
\quad\text{et}\quad
k(y)=\frac{(m,wt)}{((m,wt),t)},
\end{equation*}
puis que l'entier $\deg(\boldsymbol{\upsigma}_\D(y))=k(y)r(y)$ divise $m$. 
D 'après le début de la section \ref{SECTION5}, il ne reste 
qu'à vé\-ri\-fier que 
$a=(w,s(y))$ divise $s(y)$ et que $s(y)a^{-1}=s(y)(w,s(y))^{-1}$ est premier à $k(y)$, 
ce qui est im\-mé\-diat. 
\end{proof}

Ceci définit une application
$\boldsymbol{\upsigma}_\D : y \mapsto{} \boldsymbol{\upsigma}_\D(y)$
de $\YY_{\ell}(q,n,w)$ dans $\BB_\ell(q,m,d,w)$.

\begin{prop}
L'application $\boldsymbol{\upsigma}_\D$ est surjective, 
et ses fibres sont les classes de conjugaison sous l'automorphisme de 
Frobenius $x\mapsto x^{q^d}$.
\end{prop}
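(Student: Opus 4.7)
My plan is to use the classification of $\ell$-modular cuspidal representations by their supercuspidal support together with the Dipper--James parametrization, running it in reverse to produce a preimage from each element of $\BB_\ell(q,m,d,w)$. Since the uniqueness assertion in the lemma defining $\boldsymbol{\upsigma}_\D(y)$ already tells us that $\boldsymbol{\upsigma}_\D(y)$ is determined by its supercuspidal support $k(y)\cdot\boldsymbol{\uptau}_\D(y)$, the surjectivity and the description of the fibers both reduce to statements about the supercuspidal factor.

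For surjectivity, I would start with $\s\in\BB_\ell(q,m,d,w)$, say a cuspidal representation of $\GL_u(\kk_\D)$ with $u\mid m$, write its supercuspidal support as $k(\s)\cdot\tau$ with $\tau$ an irreducible supercuspidal $\ell$-modular representation of $\GL_{u/k(\s)}(\kk_\D)$, and apply the Dipper--James surjection \eqref{james} to obtain an $\ell$-regular element $y$ with $\boldsymbol{\uptau}_\D(y)=\tau$ and $r(y)=u/k(\s)$. I then need to check that $y$ actually lies in $\YY_\ell(q,n,w)$. The condition $\Omega(\s)\in\Aa_\ell(\D,\0,w)$ provides an $\ell$-adic lift $\st$ with $w(\st)=w$, and the relations from lemma \ref{CN} give $k(\s)=w/(w,s(\tau))$ and $a(\st)=(w,s(\tau))$; combined with $r(y)k(\s)\mid m$ and the identity $\deg(y)=r(y)d/s(\tau)$ one finds $\deg(y)\mid md\cdot(k(\s)s(\tau))^{-1}$, and since $k(\s)s(\tau)=\mathrm{lcm}(w,s(\tau))$ is a multiple of $w$, this gives $\deg(y)\mid nw^{-1}$. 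For the condition $\ee(y)=w_0$, one uses lemma \ref{CN2} applied to $\st$: the $\ell$-prime part of $a(\st)$ is $\e(\tau)$, and a short calculation with $q^{\deg(y)}=q^{r(y)d/s(\tau)}$ identifies its order modulo $\ell$ with $w_0$. Finally, by construction, $k(y)=w/(w,s(y))=k(\s)$, so $\boldsymbol{\upsigma}_\D(y)$ and $\s$ have the same supercuspidal support and hence coincide.

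For the description of the fibers, if $\boldsymbol{\upsigma}_\D(y)=\boldsymbol{\upsigma}_\D(y')$ then their supercuspidal supports agree, hence so do their sizes (giving $r(y)=r(y')$) and their supercuspidal factors $\boldsymbol{\uptau}_\D(y)=\boldsymbol{\uptau}_\D(y')$. By the fiber description in \eqref{james}, this means that $y$ and $y'$ are conjugate under $\Gal(\overline{\kk}/\kk_\D)$. Since both elements lie in the finite subfield $\kk\subset\overline{\kk}$, their Galois orbit is the orbit under the restriction of the Frobenius of $\kk_\D$, which is the map $x\mapsto x^{q^d}$. Conversely, elements in the same such orbit clearly have the same image under $\boldsymbol{\upsigma}_\D$, since $r$, $s$ and $k$ all depend only on the orbit.

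The main obstacle I anticipate is the surjectivity step, specifically the bookkeeping required to verify the two arithmetic conditions defining $\YY_\ell(q,n,w)$ from the single hypothesis $w(\s)=w$. The divisibility $\deg(y)\mid nw^{-1}$ and the equality $\ee(y)=w_0$ are not immediate from the classification alone; they require chaining the identities of lemmas \ref{CN} and \ref{CN2} with the formula $r(y)d=s(\tau)\deg(y)$ and with the divisibility $u\mid m$. Once these are established the rest of the proof is essentially a repackaging of the Dipper--James parametrization.
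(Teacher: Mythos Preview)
Your approach is essentially the same as the paper's: both prove surjectivity by starting from $\s\in\BB_\ell(q,m,d,w)$, extracting the supercuspidal support $k(\s)\cdot\tau$, taking a Dipper--James parameter $y$ for $\tau$, and then checking the two defining conditions of $\YY_\ell(q,n,w)$; and both describe the fibres via the fibres of \eqref{james}. The paper's write-up is very terse (it simply asserts $\ee(y)=w_0$ and deduces $\deg(y)\mid nw^{-1}$ from the single divisibility $wk(\s)^{-1}\mid s(\tau)$, then uses \eqref{FabricedelDongo} to see that $\s\mapsto\boldsymbol{\uptau}(\s)$ is injective), while you spell out more of the arithmetic via lemmas \ref{CN} and \ref{CN2}.

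One small correction to your sketch: lemma \ref{CN2} gives that the $\ell$-prime part of $a(\st)$ is $\e(\s)$, not $\e(\tau)$; to reach $\ee(y)=w_0$ you then combine this with remark \ref{Soitditenpassant}, which yields $\e(\tau)=k_0\cdot\e(\s)$, so that $\ee(y)=\e(\tau)$ equals the $\ell$-prime part of $k(\s)a(\st)=w$. With that adjustment your plan goes through and matches the paper's argument.
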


\begin{proof}
Soit $\s\in\BB_\ell(q,m,d,w)$. 
Il y a une unique représentation irréductible supercuspidale 
$\ell$-modulaire $\boldsymbol{\uptau}(\s)$ telle que le support supercuspidal de 
$\s$ soit égal à $\d(\s)\cdot \boldsymbol{\uptau}(\s)$. 
Soit $y\in\overline{\kk}{}^\times$ un paramètre de James pour 
$\boldsymbol{\uptau}(\s)$. 
Il est d'ordre premier à $\ell$ et vérifie $\ee(y)=w_0$, 
mais il est \textit{a priori} dans une extension de $\kk_\F$ de degré 
$n\d(\s)^{-1}$. 
Toutefois, par hypo\-thèse sur $\s$, l'entier $w\d(\s)^{-1}$ divise $s(\tau)$. 
On en déduit que $y$ est dans une extension de $\kk_\F$ de degré $nw^{-1}$, 
\ie que $y\in\YY_\ell(q,n,w)$, ce qui prouve la surjectivité. 
Ensuite, on a~:
\begin{equation}
\label{FabricedelDongo}
\deg(\s) = \frac{\deg(y)}{(\deg(y),d)}\cdot\frac{w}{(w,s(y))}
\quad\text{et}\quad
\d(\s) = (w,\deg(\s)) = \frac{w}{(w,s(y))}.
\end{equation}
Il s'ensuit que l'application $\s\mapsto\boldsymbol{\uptau}(\s)$ est injective. 
\end{proof}

% \begin{eqnarray*}
% \mu &=& \frac{n(y)}{(d,n(y))}, \\
% s_\D(y) &=& \frac{d}{(d,n(y))}.
% \end{eqnarray*}
% Étant donné $\D$, un élément $y\in\YY_{\ell}(q,n,w)$ a un degré 
% $t=n_\D(y)$ sur $\kk_\D$ et définit un entier $u$ par la formule 
% \eqref{FabricedelDongo}, 
% donc définit une représentation supercuspidale 
% $\tau$ de $\GL_t(q^d)$ telle que $\ee(\tau)=w_0$ et $(w,s(\tau))$ est égal à 
% $(w,u)$.
% Puis il définit un $\s=\sp(\tau,k)$ avec $k=(w,u)$.

On en déduit que~:
\begin{eqnarray*}
\boldsymbol{a}_{\ell}(\D,\0,w) 
&=& \sum_{y} \frac{s(\boldsymbol{\upsigma}_\D(y))}{d}\cdot\frac{1}{\deg(\boldsymbol{\upsigma}_\D(y))} \\
&=& \sum_{y} \frac{1}{\deg(y)}
\end{eqnarray*}
(où $y$ décrit $\YY_\ell(q,n,w)$),
valeur que l'on note $\boldsymbol{y}_{\ell}^{1}(q,n,w)$.

\subsection{}

Supposons maintenant que $\TT$ est quelconque, de degré
$g$ divisant $nw^{-1}$.  
On pose~:
\begin{equation*}
q(\TT)=q^{f(\TT)},
\quad
n(\TT) = m'd' = \frac{n}{\gg},
\end{equation*}
où $f(\TT)$ désigne le degré résiduel de l'endoclasse $\TT$.
Fixons un entier $u$ de la forme \eqref{quark} et une réalisation 
$([\La,n_\La,0,\b],\t)$, où 
$\t$ est un caractère simple ($\ell$-modulaire) attaché à la strate 
simple $[\La,n_\La,0,\b]$ de $\Mat_u(\D)$.
On suppose que 
l'intersection entre l'ordre hérédi\-taire associé à $\La$ 
et le cen\-tralisateur de $\F[\b]$ dans $\Mat_u(\D)$ est maximal.
Fixons aussi une $\b$-extension $\k$ de ${\t}$ et posons 
$\J=\J(\b,\La)$. 
L'application $\s\mapsto \k\otimes\s$ induit une surjection~:
\begin{equation}
\label{elevteria}
\s \mapsto [\J,\k\otimes\s] \leftrightarrow [\GL_u(\D),\rho]
\end{equation}
entre classes d'isomorphisme de représentations 
irréductibles cuspidales $\ell$-modulaires du groupe $\GL_{u'}(\kk_{\D'})$ et 
classes inertielles de représentations irréductibles cuspidales 
$\ell$-modulaires du groupe $\GL_{u}(\D)$ d'endoclasse $\TT$.
L'image d'une représentation $\s$ par \eqref{elevteria} appartient à  
$\Aa_{\ell}(\D,\TT,w)$ si et seulement si $\s$ est dans $\BB_{\ell}(q(\TT),m',d',w)$, 
et l'ensemble des antécédents d'une classe inertielle $[\GL_u(\D),\rho]$ par 
\eqref{elevteria} est de cardinal $s(\rho)$.
On obtient ainsi~:
\begin{equation*}
\boldsymbol{a}_{\ell}(\D,\TT,w) 
= \sum_{\s} \frac{s(\s)}{d'}
= \boldsymbol{y}_{\ell}^{1}(q(\TT),n(\TT),w)
\end{equation*}
(où $\s$ décrit l'ensemble $\BB_{\ell}(q(\TT),m',d',w)$). 

\subsection{}

Finalement, si l'on fixe un nombre rationnel $j\>0$,
et si l'on pose~:
\begin{equation*}
\Aa_\ell(\D,j,w) = \bigcup\limits_{l(\TT)\<j} \Aa_\ell(\D,\TT,w),
\end{equation*}
alors on obtient l'égalité~:
\begin{equation*}
\boldsymbol{a}_{\ell}(\D,j,w) 
= \sum\limits_{l(\TT)\<j} \boldsymbol{a}_{\ell}(\D,\TT,w) 
= \sum\limits_{l(\TT)\<j} \boldsymbol{y}_{\ell}^{1}(q(\TT),n(\TT),w)
= \boldsymbol{a}_{\ell}(\F,j,w),
\end{equation*}
ce qui met fin à la 
preuve du théorème \ref{comptagedeclasses} et du corollaire \ref{DonaTartt}. 

% \subsection{}

% \red{
% Si l'on compte les représentations irréductibles cuspidales $\ell$-modulaires 
% $\rho$ de $\GL_u(\D)$ qui sont $w\d(\rho)^{-1}$-relevables et 
% telles que $\omega_\rho(\w)^{m/u}=1$ plutôt que les 
% classes inertielles, on obtient~:
% \begin{equation*}
% \boldsymbol{a}'_{\ell}(\D,\TT,w) 
% = \left(\frac{n(\TT)e(\TT)}{w}\right)_0 \cdot \sum_{y} \frac{\ell^{v(\deg(y))}}{\deg(y)^2}
% = \left(\frac{n(\TT)e(\TT)}{w}\right)_0 \cdot \boldsymbol{y}_{\ell}^{2}(q(\TT),n(\TT),w). 
% \end{equation*}
% }

\providecommand{\bysame}{\leavevmode ---\ }
\providecommand{\og}{``}
\providecommand{\fg}{''}
\providecommand{\smfandname}{\&}

%%%%%%%%%%%%%%%%%%%%%%%%%%%%%%%%%%%%%%%%%%%%%%%%%%%%%%%%%%%%%%%%%%%%%%%%

\end{document}